\newtheorem{Def}{Definition}[section]
\newtheorem{thy}{Theorem}[section]
\newtheorem{lem}{Lemma}[section]
\newtheorem{rema}{Remark}[section]
\newtheorem{exm}{Example}[section]
\newtheorem{pro}{Proposition}[section]
\newtheorem{algo}{Algorithm}[section]
\begin{document}
\newcommand{\re}[1]{\begin{rema} {\rm{#1}} \end{rema}}
\newcommand{\de}[1]{\begin{Def} {\rm{#1}} \end{Def}}
\newcommand{\ex}[1]{\begin{exm} {\rm{#1}} \end{exm}}
\newcommand{\norm}[1]{\parallel #1\parallel}
\newcommand{\seq}[1]{\langle #1\rangle}
\newcommand{\sty}{\displaystyle}
\newcommand{\ra}{\rightarrow}
\newcommand{\Ra}{\Rightarrow}
\newcommand{\n}{ {\mathcal N} }
\def\QED{\hfill $\Box$\smallskip}

\title{{\Large {\bf An inertial Tseng's  extragradient method for solving multi-valued variational inequalities with one projection}}
\thanks{This work is partially supported by the National Natural Science Foundation of
China (No. 11771350), Basic and Advanced Research Project of CQ CSTC (Nos. cstc2020jcyj-msxmX0738 and cstc2018jcyjAX0605) }
\author{Changjie Fang,\thanks{Corresponding author, E-mail address: fangcj@cqupt.edu.cn} \, Ruirui Zhang,\,Shenglan Chen\\
\small\it Key Lab of Intelligent Analysis and Decision on Complex Systems, \\\small\it Chongqing University
of Posts and Telecommunications, Chongqing 400065, China\\\small\it  School of Science, Chongqing University
of Posts and Telecommunications,\\
\small\it Chongqing
400065,
 China}}
\date{}
\maketitle
\vspace*{-9mm}
\begin{center}
\begin{minipage}{5.5in}
{\bf Abstract.}\quad In this paper, we introduce an inertial  Tseng's  extragradient method for solving multi-valued variational inequalits, in which only one projection is needed at each iterate. We also obtain the strong convergence results of the proposed algorithm, provided that the multi-valued mapping is continuous and pseudomonotone with nonempty compact convex values. Moreover, numerical simulation results illustrate the efficiency of our method when compared to existing methods.
\\ \ \\
{\bf Keywords.}\quad Inertial method, Tseng's  extragradient method, multi-valued variational inequalities, pseudomonotone, convergence.
\\ \ \\
\end{minipage}
\end{center}
\section{Introduction}

In this paper, we consider the following multi-valued variational inequality, denoted by $MVI(A,C)$: to find $x^\ast\in C$ and $w^{\ast} \in A(x^{\ast})$ such that
\begin{align}\label{z01}
 \langle w^\ast, y-x^{\ast}\rangle \geq 0 \qquad \forall\,\, y \in C,
\end{align}
where $C$ is a non-empty closed convex set in $\mathbb{R}^{n}$, $A$ is a multi-valued mapping form $\mathbb{R}^{n}$ into $2^{\mathbb{R}^n}$ with nonempty values, $\langle\cdot , \cdot\rangle$  and $\|\cdot\|$ denote the usual inner product and norm in $\mathbb{R}^{n}$, respectively. If $A$ is a single-valued mapping, then $MVI(A,C)$ reduces to the classic variational inequality problem.

Variational inequality theory has emerged as an important tool in studying obstacle problems,
unilateral problems in mechanics, equilibrium problems, and so on; see \cite{FC2014,FH11,FH13,He06,SS99,TV2018,WXZ03,ZM17}
and the references therein. In order to explore relevant convergent results and analyze error estimates,
many methods for solving variational inequality problem \eqref{z01} have been proposed, in which the most popular method is the projection-type one. An important projection algorithm for solving variational inequalities is
the Extragradient Method proposed by Korpelevich \cite{Kor76}; In \cite{Kor76}, there is the need to calculate two projections
onto $C$, and convergence is proved under the assumption of Lipschitz continuity and monotonicity. It is well known that, if $C$ is a general closed convex set, this might
be computationally expensive and hence it will affect the efficiency of the proposed algorithms. To overcome the difficulty, Censor et al. \cite{CGR11} proposes a subgradient extragradient algorithm for solving single-valued variational inequality, in which  the second projection is onto $C$ instead of  the half-space; see also \cite{CGR12,KS14}.  We note that the above algorithms needs at least  two projections per iteration. Further,  One-projection methods for solving single-valued variational inequality problems are proposed; see for example \cite{Mai09,Mal2015,Ts00}.

Projection-type methods for solving multi-valued variational inequality have been proposed. Li and He \cite{LH2009} proposed a projection algorithm for solving multi-valued variational inequality in which the hyperplane strictly separates the current iterate from the
solution set; see also \cite{FH11}. Xia and Huang \cite{XH11} studied a projection-proximal point algorithm for solving  multi-valued variational inequalities  in Hilbert spaces and obtained the weak convergence result under the assumption of pseudomonotonicity. Further, Fang and Chen \cite{FC2014} extended the  subgradient
extragradient algorithm in \cite{CGR11} to solve multi-valued variational inequality\eqref{z01}. Recently, Burachik and Milln \cite{BM19} suggesteded a projection-type algorithm for solving \eqref{z01}, in which the next iterate is a projection of the  initial point onto the intersection of some suitable convex subsets. He et al. \cite{HHL19} proposed  two  projection-type algorithms for solving the multivalued variational inequality and studied the convergence of
the proposed algorithms.  Inspired by Fang et al.\cite{FC2014,He97}, Dong et al.\cite{DLY17}  presented a projection and
contraction method for solving multi-valued variational inequality \eqref{z01} and prove the strong convergence of the proposed algorithm.

The inertial-type methods originate from an implicit discretization method of the heavy-ball with
friction(HBF) system, the main feature of which is that each new iterate point depends on the previous two iterates(\cite{Al00}). Subsequently, this inertial technique was extended to solve the inclusion problem of maximal monotone operators (\cite{AA01}). Since then, there has been increasing interest in studying
inertial-type algorithms; see, for example, inertial forward-backward splitting methods(\cite{APR14,OCBP14}), inertial
Douglas-Rachford splitting method (\cite{BCH15}), inertial ADMM (\cite{CCMY15}), inertial-type methods for variational
inequalities (\cite{CMY15,ZFC18}).

Motivated by the recent work mentioned above, in this paper, we present an  inertial Tseng's  extragradient method for solving multi-valued variational inequalities, in which only one projection is needed at each iterate; see Step 3 in Algorithm \ref{alg}. In our method, the projection onto the hypeplane in \cite{FC2014} is replaced by the Tseng's term; see Step 4 in Algorithm \ref{alg}. In addition, the mapping $A$ is assumed to be  pseudomonotone with nonempty
compact convex values. Under those assumptions above, we prove that the iterative sequence generated by our method converges strongly to a solution of the multi-valued variational inequality \eqref{z01}.  We also present numerical results of the proposed method.

This paper is organized as follows. In Section \ref{pre}, we present  definitions and auxiliary material. In Section \ref{res}, we describe our algorithm and investigate the global convergence of our method. Numerical experiments are reported in Section \ref{nume}.
\section{Preliminaries}\label{pre}
\setcounter{equation}{0}
In this section, we introduce some basic concepts which are used in this paper.

The multi-valued mapping $A: \mathbb{R}^n\rightarrow 2^{\mathbb{R}^n}$  is said to be upper semicontinuous at $x\in C$ if for every open set $V$ containing $A(x)$, there is an open set $U$ containing $x$ such that $A(y)\in V$ for all $y\in C\bigcap U$. $A$ is said to be lower semicontinuous at $x\in C$ if given any sequence $x_{n}$ converging to $x$ and any $y\in A(x)$, there exists a sequence $y_{n}\in A(x_{n})$ that converges to $y$. $A$ is said to be continuous at $x\in C$ if it is both upper semicontinuous and lower semicontinuous at $x$.

Let the set $C$ be given by
\begin{align*}
C:=\{x\in \mathbb{R}^{n}\|g(x)\leq0\},
\end{align*}
where $g:\mathbb{R}^{n}\rightarrow \mathbb{R}$ is a convex function. We denote the subdifferential of $g$ at a point $x$ by
\begin{align*}
\partial g(x):=\{w\in \mathbb{R}^{n}|g(y)\geq g(x)+\langle w,y-x\rangle,\forall y \in \mathbb{R}^{n}\}.
\end{align*}

The multi-valued mapping $A$ is called monotone on $C$, if for any $x,y\in C$,
\begin{align*}
\langle u-\nu,x-y\rangle\geq0,\,\,\,\ \forall u\in A(x),\,\,\,\ \forall \nu\in A(y).
\end{align*}

The multi-valued mapping $A$ is called pseudomonotone on $C$, if for any $x,y\in C$,
\begin{equation}
\begin{aligned}
\langle\nu,x-y\rangle\geq0,\,\,\,\ \exists\nu\in A(y)\ \Longrightarrow\langle u,x-y\rangle\geq0,\,\,\,\ \forall u\in A(x).
\label{Z02}
\end{aligned}
\end{equation}

 Denote by $S$  the solution set of the multi-value variation inequality \eqref{z01}. Throughout this paper, we assume that the solution set $S$ is nonempty satisfying the following property:
\begin{equation}
\begin{aligned}
\langle w,y-x\rangle\geq0,\,\,\,\ \forall y\in C\,\,\,\ \forall w\in A(y)\,\,\,\ \forall x\in S.
\label{Z03}
\end{aligned}
\end{equation}
The property \eqref{Z03} holds if $A$ is pseudomonotone on $C$.

The projection of a point $x\in \mathbb{R}^{n}$ onto a closed set $C$ is defined as
\begin{align*}
P_{C}(x)=\textrm{argmin}_{y\in C}\parallel y-x\parallel.
\end{align*}

\begin{lem}\label{Lem2.1}(\textup{\cite{Zara71}}) Let $C$ be a closed convex subset of $\mathbb{R}^{n}$. For any $x,y\in \mathbb{R}^{n}$  and $z\in C$, the following statements hold,
\begin{itemize}
\item[(i)]$\langle x-P_{C}(x),z-P_{C}(x)\rangle\leq0$;
\item[(ii)]$\|P_{C}(x)-P_{C}(y)\|^{2}\leq\|x-y\|^{2}-\|P_{C}(x)-x+y-P_{C}(y)\|^{2}$.
\end{itemize}
\end{lem}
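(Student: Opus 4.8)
The plan is to derive both parts from the variational characterisation of the metric projection, using only convexity of $C$ together with the minimality defining $P_C$.

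First I would prove (i). Fix $x \in \mathbb{R}^n$ and set $p := P_C(x)$. For $z \in C$ and $t \in [0,1]$, convexity of $C$ gives $p + t(z-p) \in C$, so the minimality defining $p$ yields $\|p + t(z - p) - x\|^2 \ge \|p - x\|^2$. Expanding the left-hand side and cancelling $\|p - x\|^2$ leaves $2t\langle p - x, z - p\rangle + t^2\|z - p\|^2 \ge 0$; dividing by $t > 0$ and letting $t \downarrow 0$ gives $\langle p - x, z - p\rangle \ge 0$, which is exactly (i).

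Next I would deduce (ii) from (i). Write $p := P_C(x)$ and $q := P_C(y)$. Applying (i) with the pair $(x,q)$ (legitimate since $q \in C$) gives $\langle x - p, q - p\rangle \le 0$, and applying (i) with the pair $(y,p)$ gives $\langle y - q, p - q\rangle \le 0$. Adding these two inequalities and rearranging produces the firm-nonexpansiveness estimate $\|p - q\|^2 \le \langle x - y, p - q\rangle$. It then remains to observe the elementary identity
\begin{align*}
\|x - y\|^2 - \|(p - x) + (y - q)\|^2 = 2\langle x - y, p - q\rangle - \|p - q\|^2,
\end{align*}
which follows by expanding the squared norm using $(p - x) + (y - q) = (p - q) - (x - y)$. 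Combining this identity with the firm-nonexpansiveness estimate gives $\|x - y\|^2 - \|(p - x) + (y - q)\|^2 \ge 2\|p - q\|^2 - \|p - q\|^2 = \|p - q\|^2$, which is (ii).

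I do not expect any genuine obstacle here: the statement is classical and is in fact quoted from \cite{Zara71}, so one could simply cite it. The only points needing care are the sign bookkeeping in (ii) when the two instances of (i) are added, and the recognition that the claimed inequality reduces, via the single algebraic identity above, to the standard firm-nonexpansiveness property of $P_C$.
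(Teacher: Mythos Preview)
Your proof is correct in every detail: the derivation of (i) from the minimising property via the standard convex-combination argument is the classical one, and your passage from (i) to (ii) through firm nonexpansiveness $\|p-q\|^2 \le \langle x-y, p-q\rangle$ followed by the algebraic identity is clean and accurate.

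As for comparison with the paper: there is nothing to compare against. The paper states Lemma~\ref{Lem2.1} as a citation from \cite{Zara71} and provides no proof of its own. Your argument therefore supplies what the paper omits, and does so by the standard route; the paper neither confirms nor contradicts your approach.
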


\begin{pro}\textup{\cite{LH2009}}\label{pro01} $x\in C$, and $w\in A(x)$ solve the problem \eqref{z01} if and only if
\begin{align*}
r_{\mu}(x,w):=x-P_{C}(x-\mu w)=0.
\end{align*}
\end{pro}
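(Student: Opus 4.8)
The plan is to prove Proposition \ref{pro01} as a standard fixed-point characterization of the solution set, using the variational characterization of the projection from Lemma \ref{Lem2.1}(i). Concretely, I want to show that $x \in C$ together with some $w \in A(x)$ solves \eqref{z01} if and only if $r_\mu(x,w) = x - P_C(x-\mu w) = 0$, where $\mu > 0$ is arbitrary but fixed.

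First I would handle the ``if'' direction. Suppose $r_\mu(x,w) = 0$, i.e.\ $x = P_C(x - \mu w)$. In particular $x \in C$. Applying Lemma \ref{Lem2.1}(i) with the point $x - \mu w$ in place of $x$ and an arbitrary $z = y \in C$, we get $\langle (x-\mu w) - P_C(x-\mu w), \, y - P_C(x-\mu w)\rangle \le 0$. Substituting $P_C(x-\mu w) = x$ turns the left factor into $(x - \mu w) - x = -\mu w$, so the inequality becomes $\langle -\mu w, \, y - x\rangle \le 0$, and dividing by $\mu > 0$ yields $\langle w, y - x\rangle \ge 0$ for all $y \in C$. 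Hence $x$ and $w$ solve \eqref{z01}.

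Next the ``only if'' direction. Suppose $x \in C$ and $w \in A(x)$ satisfy $\langle w, y - x\rangle \ge 0$ for all $y \in C$, equivalently $\langle -\mu w, y - x\rangle \le 0$ for all $y \in C$. Rewriting, $\langle (x - \mu w) - x, \, y - x\rangle \le 0$ for all $y \in C$, with $x \in C$. This is precisely the variational inequality characterizing $x$ as the projection of $x - \mu w$ onto $C$: by Lemma \ref{Lem2.1}(i) (or its converse, which is the defining property of the projection onto a closed convex set), a point $p \in C$ equals $P_C(u)$ iff $\langle u - p, y - p\rangle \le 0$ for all $y \in C$. Taking $u = x - \mu w$ and $p = x$, we conclude $x = P_C(x - \mu w)$, i.e.\ $r_\mu(x,w) = 0$.

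The only genuinely delicate point is that Lemma \ref{Lem2.1}(i) as stated gives only the necessary condition for the projection, whereas the ``only if'' direction needs the sufficiency (uniqueness) of the projection characterization; since $C$ is closed and convex this is classical, so I would either invoke it directly or note that if $x' := P_C(x-\mu w)$ then both $x$ and $x'$ satisfy the projection inequality, and combining the two inequalities (test each with the other point) forces $\|x - x'\|^2 \le 0$, hence $x = x'$. Everything else is a one-line substitution, so there is no real obstacle here — the proposition is essentially a restatement of the projection's characterizing inequality with the particular point $x - \mu w$.
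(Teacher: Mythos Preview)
Your proof is correct and is the standard fixed-point characterization via the variational inequality for the projection. Note that the paper does not supply its own proof of this proposition at all; it merely cites the result from \cite{LH2009}, so there is nothing to compare against beyond confirming that your argument (Lemma~\ref{Lem2.1}(i) for the ``if'' direction and the classical converse/uniqueness of the projection for the ``only if'' direction) is sound, which it is.
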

\begin{pro}(\textup{\cite{FC2014}})\label{pro02} For any $x\in \mathbb{R}^{n}$, $w\in A(x)$ and $\mu>0$,
\begin{align*}
\min\{1,\mu\}\|r_{1}(x,w)\|\leq\parallel r_{\mu}(x,w)\parallel\leq \max\{1,\mu\}\|r_{1}(x,w)\|.
\end{align*}
\end{pro}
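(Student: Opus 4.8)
The plan is to deduce the two-sided bound from two monotonicity properties of the natural residual: the map $\mu\mapsto\|r_{\mu}(x,w)\|$ is nondecreasing on $(0,\infty)$, and the map $\mu\mapsto\|r_{\mu}(x,w)\|/\mu$ is nonincreasing on $(0,\infty)$. Once these are available, the proposition is immediate. If $\mu\ge 1$, the first property gives $\|r_{1}(x,w)\|\le\|r_{\mu}(x,w)\|$ and the second gives $\|r_{\mu}(x,w)\|/\mu\le\|r_{1}(x,w)\|$, i.e.\ $\|r_{\mu}(x,w)\|\le\mu\|r_{1}(x,w)\|$; if $\mu\le 1$, the first gives $\|r_{\mu}(x,w)\|\le\|r_{1}(x,w)\|$ and the second gives $\|r_{1}(x,w)\|\le\|r_{\mu}(x,w)\|/\mu$, i.e.\ $\mu\|r_{1}(x,w)\|\le\|r_{\mu}(x,w)\|$. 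In either case this is precisely $\min\{1,\mu\}\|r_{1}(x,w)\|\le\|r_{\mu}(x,w)\|\le\max\{1,\mu\}\|r_{1}(x,w)\|$.

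To establish the monotonicity properties, fix $x$, $w\in A(x)$ and $0<\mu\le\nu$, and write $p_{\mu}=P_{C}(x-\mu w)$, $p_{\nu}=P_{C}(x-\nu w)$, $r_{\mu}=x-p_{\mu}$, $r_{\nu}=x-p_{\nu}$, so that $p_{\nu}-p_{\mu}=r_{\mu}-r_{\nu}$. Applying Lemma~\ref{Lem2.1}(i) to the point $x-\mu w$ with $z=p_{\nu}\in C$, and then to the point $x-\nu w$ with $z=p_{\mu}\in C$, gives
\begin{align*}
\langle r_{\mu}-\mu w,\, r_{\mu}-r_{\nu}\rangle\le 0,\qquad \langle r_{\nu}-\nu w,\, r_{\nu}-r_{\mu}\rangle\le 0.
\end{align*}
Adding these and simplifying yields $\|r_{\mu}-r_{\nu}\|^{2}\le(\mu-\nu)\langle w,\,r_{\mu}-r_{\nu}\rangle$; since $\mu\le\nu$, the right-hand side must be nonnegative, so $\langle w,\,r_{\mu}-r_{\nu}\rangle\le 0$. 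Substituting this back into the first inequality gives $\langle r_{\mu},\,r_{\mu}-r_{\nu}\rangle\le\mu\langle w,\,r_{\mu}-r_{\nu}\rangle\le 0$, hence $\|r_{\mu}\|^{2}\le\langle r_{\mu},r_{\nu}\rangle\le\|r_{\mu}\|\,\|r_{\nu}\|$ by Cauchy--Schwarz, which is the first property.

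For the quotient, I instead eliminate $\langle w,\,r_{\nu}-r_{\mu}\rangle$ between the same two inequalities: the first gives $\mu\langle w,\,r_{\nu}-r_{\mu}\rangle\le\langle r_{\mu},r_{\nu}\rangle-\|r_{\mu}\|^{2}$ and the second gives $\|r_{\nu}\|^{2}-\langle r_{\mu},r_{\nu}\rangle\le\nu\langle w,\,r_{\nu}-r_{\mu}\rangle$. Multiplying the former by $\nu/\mu>0$ and chaining with the latter, then clearing the denominator, produces $\mu\|r_{\nu}\|^{2}+\nu\|r_{\mu}\|^{2}\le(\mu+\nu)\langle r_{\mu},r_{\nu}\rangle\le(\mu+\nu)\|r_{\mu}\|\,\|r_{\nu}\|$. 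Setting $a=\|r_{\mu}\|$ and $b=\|r_{\nu}\|$, this rearranges to $(\nu a-\mu b)(a-b)\le 0$; since the first property already gives $a\le b$, the factor $a-b$ is nonpositive, forcing $\nu a\ge\mu b$, that is $\|r_{\mu}\|/\mu\ge\|r_{\nu}\|/\nu$.

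The argument uses nothing about $A$ beyond the fact that $w\in A(x)$ is a fixed vector, and no projection property other than Lemma~\ref{Lem2.1}(i); the only place requiring care is the bookkeeping---which of the two projection inequalities to add for the nondecreasing claim versus which linear combination to form in order to kill the $w$-term for the quotient claim---together with the elementary factorization $(\nu a-\mu b)(a-b)\le 0$, which is the one small trick that makes the quotient bound drop out. I do not anticipate any genuine analytic obstacle.
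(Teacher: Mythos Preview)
Your proof is correct. The paper itself does not supply a proof of this proposition: it is stated with a citation to \cite{FC2014} and used as a black box throughout. What you have written is the standard argument---the two monotonicity properties of the natural residual (nondecreasing in $\mu$, and $\|r_\mu\|/\mu$ nonincreasing in $\mu$) derived from the projection inequality of Lemma~\ref{Lem2.1}(i)---and it goes through exactly as you outlined. The only cosmetic point is the edge case $a=b$ in the factorization $(\nu a-\mu b)(a-b)\le 0$: there the sign of $\nu a-\mu b$ is not forced by the inequality, but since $a=b$ and $\mu\le\nu$ you get $a/\mu\ge b/\nu$ directly, so nothing is lost.
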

\begin{lem}\label{Lem2.2}(\textup{\cite{ZFC18}})For all $x,y\in \mathbb{H}$ and $\lambda \in[0,1]$,
\begin{align*}
\parallel\lambda x+(1-\lambda)y\parallel^{2}=\lambda\parallel x\parallel^{2}+(1-\lambda)\|y\|^2-\lambda(1-\lambda)\|x-y\|^{2},
\end{align*}
where $\mathbb{H}$ is a real Hilbert space.
\end{lem}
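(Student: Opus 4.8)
The plan is to prove the identity by a direct computation that uses only the bilinearity and symmetry of the inner product on $\mathbb{H}$ together with the relation $\|z\|^{2}=\langle z,z\rangle$. No deep property of Hilbert spaces is needed beyond the inner-product structure, so the argument is purely algebraic and holds verbatim in any real inner-product space.

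First I would expand the left-hand side. Writing $\|\lambda x+(1-\lambda)y\|^{2}=\langle \lambda x+(1-\lambda)y,\ \lambda x+(1-\lambda)y\rangle$ and distributing the inner product, one gets
\begin{align*}
\|\lambda x+(1-\lambda)y\|^{2}=\lambda^{2}\|x\|^{2}+2\lambda(1-\lambda)\langle x,y\rangle+(1-\lambda)^{2}\|y\|^{2}.
\end{align*}

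Next I would expand the term appearing on the right-hand side, $\|x-y\|^{2}=\|x\|^{2}-2\langle x,y\rangle+\|y\|^{2}$, and substitute it into $\lambda\|x\|^{2}+(1-\lambda)\|y\|^{2}-\lambda(1-\lambda)\|x-y\|^{2}$. Collecting terms, the coefficient of $\|x\|^{2}$ becomes $\lambda-\lambda(1-\lambda)=\lambda^{2}$, the coefficient of $\|y\|^{2}$ becomes $(1-\lambda)-\lambda(1-\lambda)=(1-\lambda)^{2}$, and the coefficient of $\langle x,y\rangle$ is $2\lambda(1-\lambda)$. This is precisely the expression obtained for the left-hand side in the previous step, so the two sides coincide and the identity follows.

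The only point that needs any attention is the bookkeeping of the coefficients of $\|x\|^{2}$, $\|y\|^{2}$ and $\langle x,y\rangle$; there is no real obstacle here, and in fact the identity is valid for every real $\lambda$, the restriction $\lambda\in[0,1]$ being relevant only for the later convexity-type applications of the lemma.
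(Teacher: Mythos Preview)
Your proof is correct: the identity is a straightforward consequence of the bilinearity and symmetry of the inner product, and your expansion of both sides and matching of coefficients is accurate (including your remark that the restriction $\lambda\in[0,1]$ is inessential for the identity itself). The paper does not supply a proof of this lemma at all; it simply quotes the statement from \cite{ZFC18} as a known auxiliary fact, so your direct computation in fact supplies more than the paper does.
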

\begin{lem}\label{Lem2.3}(\textup{\cite{AA01}}) Let $\{\varphi_{n}\}$, $\{\theta_{n}\}$, and $\{\alpha_{n}\}$ be sequences in $[0,+\infty)$, such that
\begin{align*}
\varphi_{n+1}\leq\varphi_{n}+\alpha_{n}(\varphi_{n}-\varphi_{n-1})+\theta_{n}\,\,\,\ \forall n\geq1, \,\,\,\ \sum_{n=1}^{+\infty}\theta_{n}<+\infty
\end{align*}
and there exists a real number $\alpha$ with $0\leq\alpha_{n}\leq\alpha<1$ for all $n\in N$. Then, the following hold:
\begin{itemize}
\item[(i)]$\Sigma_{n=1}^{+\infty}[\varphi_{n}-\varphi_{n-1}]_{+}<+\infty, where [t]_{+}=\max\{t,0\}$.
\item[(ii)]There exists $\varphi^{*}\in [0,+\infty)$, such that $\lim_{n\rightarrow+\infty}\varphi_{n}=\varphi^{*}$.\\
\end{itemize}
\end{lem}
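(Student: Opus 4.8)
The plan is to reduce everything to controlling the nonnegative increments $\delta_n:=[\varphi_n-\varphi_{n-1}]_+$ for $n\ge 1$. First, since $0\le\alpha_n\le\alpha<1$ and $\varphi_n-\varphi_{n-1}\le\delta_n$, the standing hypothesis gives
\[
\varphi_{n+1}-\varphi_n\le\alpha_n(\varphi_n-\varphi_{n-1})+\theta_n\le\alpha\,\delta_n+\theta_n,\qquad n\ge 1,
\]
and because the right-hand side is nonnegative we may pass to positive parts and obtain the one-step recursion $\delta_{n+1}\le\alpha\,\delta_n+\theta_n$.

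Next I would unroll this recursion to get $\delta_{n+1}\le\alpha^{n}\delta_1+\sum_{j=1}^{n}\alpha^{\,n-j}\theta_j$, sum over $n=1,\dots,N$, and interchange the order in the resulting finite double sum of nonnegative terms. Using $\sum_{n\ge j}\alpha^{\,n-j}=1/(1-\alpha)$, this yields
\[
\sum_{n=1}^{N}\delta_{n+1}\le\frac{\alpha}{1-\alpha}\,\delta_1+\frac{1}{1-\alpha}\sum_{j=1}^{N}\theta_j\le\frac{\alpha}{1-\alpha}\,\delta_1+\frac{1}{1-\alpha}\sum_{j=1}^{\infty}\theta_j<+\infty,
\]
a bound uniform in $N$. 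Adding the single finite term $\delta_1$ proves (i), i.e. $\sum_{n=1}^{\infty}[\varphi_n-\varphi_{n-1}]_+<+\infty$; in particular the partial sums $s_n:=\sum_{k=1}^{n}\delta_k$ increase to some $s\in[0,+\infty)$.

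For (ii) I would introduce the auxiliary sequence $u_n:=\varphi_n-s_n$. Since $\varphi_{n+1}-\varphi_n\le\delta_{n+1}$ we have
\[
u_{n+1}=\varphi_{n+1}-s_{n+1}=(\varphi_{n+1}-\varphi_n)-\delta_{n+1}+u_n\le u_n,
\]
so $\{u_n\}$ is nonincreasing; moreover $u_n=\varphi_n-s_n\ge -s_n\ge -s>-\infty$ because $\varphi_n\ge 0$. Hence $\{u_n\}$ converges to some $u^{*}\in\mathbb{R}$, and consequently $\varphi_n=u_n+s_n\to u^{*}+s=:\varphi^{*}$. Finally $\varphi^{*}\ge 0$ because every $\varphi_n\ge 0$, which is (ii).

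I do not anticipate any genuine difficulty: the statement is elementary and the whole argument is summability and monotonicity bookkeeping. The only mildly delicate point is the choice of the auxiliary sequence in the last step — subtracting the partial sums of the increments is precisely what converts $\{\varphi_n\}$ into a monotone sequence, after which boundedness from below (and hence convergence) follows at once from (i) together with the nonnegativity of $\varphi_n$.
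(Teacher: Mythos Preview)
Your argument is correct and is essentially the standard proof of this lemma (originally due to Alvarez and Attouch): derive the one-step recursion $\delta_{n+1}\le\alpha\delta_n+\theta_n$ for the positive increments, iterate and sum via a geometric series to obtain (i), and then use the auxiliary monotone sequence $u_n=\varphi_n-\sum_{k\le n}\delta_k$ to deduce (ii). There is nothing to compare against here, since the paper does not supply its own proof of Lemma~\ref{Lem2.3} but merely quotes it from \cite{AA01}; your write-up would serve perfectly well as a self-contained proof in its place.
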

\begin{lem}\label{Lem2.4} (\textup{\cite{Op67}}). Let $C$ be a nonempty set of $\mathbb{H}$ and ${x_{n}}$ be a sequence in $\mathbb{H}$ such that the following two conditions hold:
\begin{itemize}
\item[(i)] For every $x\in C$, $\lim_{n\rightarrow\infty}\|x_{n}-x\|$ exists.
\item[(ii)] Every sequential weak cluster point of $x_{n}$ is in $C$. then, $x_{n}$ converges weakly to a point in $C$. \\
\end{itemize}
\end{lem}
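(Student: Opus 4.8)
The plan is to exploit the fact that condition (i) forces the sequence $\{x_n\}$ to be bounded, so that, working in the Hilbert space $\mathbb{H}$, it admits weak cluster points; I would then show it has \emph{exactly one} such cluster point, which lies in $C$ by condition (ii), and conclude that a bounded sequence with a unique weak cluster point converges weakly to it. Concretely, I would first fix an arbitrary $x\in C$; by (i) the real sequence $\|x_n-x\|$ converges, hence is bounded, so $\{x_n\}$ is bounded, and by weak sequential compactness of bounded sets in a Hilbert space it has at least one weakly convergent subsequence, whose limit belongs to $C$ by (ii).

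The heart of the argument is uniqueness of the weak cluster point, and here I would use an Opial-type identity. Suppose $p$ and $q$ are both weak cluster points of $\{x_n\}$; by (ii) they both lie in $C$, so by (i) the limits $\lim_{n\to\infty}\|x_n-p\|$ and $\lim_{n\to\infty}\|x_n-q\|$ exist. Expanding the squared norms and subtracting gives
\[
\|x_n-p\|^2-\|x_n-q\|^2 = 2\langle x_n,\, q-p\rangle + \|p\|^2-\|q\|^2 .
\]
Since the left-hand side has a limit, so does $\langle x_n, q-p\rangle$, say to some $\ell\in\mathbb{R}$. Passing to the limit along a subsequence weakly convergent to $p$ yields $\ell=\langle p,\, q-p\rangle$, and along a subsequence weakly convergent to $q$ yields $\ell=\langle q,\, q-p\rangle$; subtracting these gives $\langle p-q,\, q-p\rangle=0$, i.e. $\|p-q\|^2=0$, so $p=q$.

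Finally I would close with a standard subsequence argument: if $\{x_n\}$ did not converge weakly to its unique cluster point $p$, then some subsequence would remain outside a fixed weak neighbourhood of $p$; being bounded, that subsequence would have a further weakly convergent subsequence, whose limit would be a weak cluster point of $\{x_n\}$ and hence equal to $p$, a contradiction. The only genuinely delicate step is the uniqueness claim, and the Opial-type identity above together with the evaluation along the two different weakly convergent subsequences is precisely what makes it work; the boundedness and the final subsequence extraction are routine compactness bookkeeping.
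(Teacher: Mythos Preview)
Your argument is correct and is precisely the standard proof of this classical result: boundedness from (i), weak sequential compactness in $\mathbb{H}$, uniqueness of the weak cluster point via the polarization identity evaluated along two different weakly convergent subsequences, and the final subsequence extraction. Note, however, that the paper does not supply its own proof of this lemma; it is quoted from Opial~\cite{Op67} as a preliminary tool, so there is no in-paper argument to compare against---your proof simply fills in what the paper takes for granted.
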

\section{Main results}\label{res}
\indent
\par
\setcounter{equation}{0}In this section, we introduce the inertial Tseng's extragradient algorithm for solving the multivalued variational inequality problems.
In order to find a point of the set $C$, we have the following procedure.

\textbf{Procedure A \cite{Kon98}}

Data  A Point $x\in \mathbb{R}^{n}$.

Output A point $R(x)$.

step 0. If $x\in C$, set $R(x)=x$. Otherwise, set $y_{0}=x$, $n=0$.

Step 1. Choose $w_{n}\in \partial g(y_{n})$, set $y_{n+1}-2g(y_{n}) \frac{w_{n}}{\|w_{n}\|^{2}}$.

Step 2. If $y_{n+1}\in C$, set $R(x)=y_{n+1}$ and stop. Otherwise, set $n=n+1$ go to Step 1. We get the

following results from Procedure A.

\begin{pro}\cite{KF84}\label{pro03} The number of iterations in Procedure A is finite.
\end{pro}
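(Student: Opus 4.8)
The plan is to prove finite termination under the (necessary and standard) regularity assumption that $C$ has nonempty interior: fix a point $z$ and a radius $\rho>0$ with the closed ball $\bar B(z,\rho)\subseteq C$, so that $g(z)<0$. Some such hypothesis is indispensable, since otherwise the procedure can cycle forever: for $g(x)=\|x\|$ with $C=\{x:\|x\|\le0\}=\{0\}$ one has $w_n=y_n/\|y_n\|$ and $y_{n+1}=y_n-2\|y_n\|\,w_n=-y_n$, so $\|y_{n+1}\|=\|y_n\|>0$ and Step~2 never triggers. We may assume $x=y_0\notin C$, otherwise the procedure halts at Step~0. For every index $n$ that is reached we have $g(y_n)>0$; applying the subgradient inequality at any $c\in C$ gives $\langle w_n,c-y_n\rangle\le g(c)-g(y_n)<0$, so $w_n\neq0$ and Step~1 is well defined. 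Geometrically, $y_{n+1}$ is the reflection of $y_n$ across the hyperplane bounding the supporting half-space $H_n:=\{u:g(y_n)+\langle w_n,u-y_n\rangle\le0\}\supseteq C$.

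First I would record a Fej\'er-type estimate with respect to $z$. Using the identity $\langle w_n,y_n-z\rangle=g(y_n)+\|w_n\|\,\delta_n$, where $\delta_n\ge\rho$ is the depth of $z$ inside $H_n$, the cross term exactly cancels the quadratic term after squaring, and one obtains
\[ \|y_{n+1}-z\|^{2}=\|y_n-z\|^{2}-4\delta_n\frac{g(y_n)}{\|w_n\|}\le\|y_n-z\|^{2}-4\rho\,\frac{g(y_n)}{\|w_n\|}. \]
In particular $\{y_n\}$ stays in the compact ball $K:=\bar B\big(z,\|x-z\|\big)$, on which the finite convex function $g$ is Lipschitz, so $\|w_n\|\le L$ for some $L$. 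If the procedure never stopped, telescoping the displayed inequality over all $n$ would give $\sum_n g(y_n)/\|w_n\|<+\infty$, whence $g(y_n)/\|w_n\|\to0$ and therefore $g(y_n)\le L\cdot g(y_n)/\|w_n\|\to0$.

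The decisive step, which I expect to be the main obstacle, is to turn ``$g(y_n)\to0$'' into a genuine contradiction by showing that once the residual $g(y_n)$ is small the next reflection lands strictly inside $C$. Choose a subsequence $y_{n_k}\to\bar y$; then $g(\bar y)=0$, so $\bar y\in\partial C$, and since $g(z)<0$ we have $0\notin\partial g(\bar y)$, so any limit $\bar w$ of $w_{n_k}$ is nonzero and $\|w_n\|\ge m>0$ for $n_k$ large. Expanding $g$ to second order about $y_{n_k}$ along the displacement $y_{n_k+1}-y_{n_k}=-2g(y_{n_k})w_{n_k}/\|w_{n_k}\|^{2}$ (for $C^{2}$ data; in the polyhedral case $g$ is affine on the active piece and the estimate is exact) gives $g(y_{n_k+1})=-g(y_{n_k})+O\!\big(g(y_{n_k})^{2}\big)$, the remainder being controlled by the Hessian bound on $K$ together with the lower bound $m$ on $\|w_n\|$. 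Hence $g(y_{n_k+1})<0$ as soon as $g(y_{n_k})$ falls below the threshold $m^{2}/(2H_{\max})$, which happens for all large $k$; but then $y_{n_k+1}\in C$, so Step~2 would already have stopped the procedure, contradicting the assumption that it runs forever. Therefore the number of iterations is finite.

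The heart of the matter is precisely this local ``overshoot'' estimate $g(y^{+})\approx-g(y)$: it is what makes the over-relaxation factor $2$ essential, since a factor in $(0,2)$ yields only asymptotic, not finite, convergence, and it is where the regularity of $\partial C$ enters. For smooth $g$ the second-order expansion above suffices; for a general convex $g$ one instead uses the inscribed ball $\bar B(z,\rho)$ to exhibit a fixed amount of inward room, reducing the claim to the classical finite convergence of the relaxation method, which is the substance of the cited result.
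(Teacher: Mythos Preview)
The paper does not supply a proof of this proposition at all: it is stated with a bare citation to \cite{KF84} and no argument. There is therefore nothing in the paper to compare your attempt to; you are reconstructing a proof of a result the authors simply quote.

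Your write-up is largely sound through the Fej\'er step. The counterexample with $g(x)=\|x\|$ correctly shows that a Slater-type hypothesis is indispensable; the identity
\[
\|y_{n+1}-z\|^{2}=\|y_n-z\|^{2}-4\delta_n\,\frac{g(y_n)}{\|w_n\|}
\]
is derived correctly, and from it boundedness of $\{y_n\}$, the Lipschitz bound $\|w_n\|\le L$, the summability of $g(y_n)/\|w_n\|$, and hence $g(y_n)\to 0$ all follow cleanly. You also correctly argue that $\|w_n\|$ stays bounded away from zero along any convergent subsequence, since a cluster point lies on $\partial C$ and cannot minimize $g$.

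The gap is exactly where you locate it yourself: the ``overshoot'' estimate $g(y_{n+1})\approx -g(y_n)$. Your derivation of this uses a second-order Taylor expansion, which requires $g\in C^{2}$, and the polyhedral remark covers only the case when the active affine piece does not change between $y_{n}$ and $y_{n+1}$. For a merely convex $g$---which is all the paper assumes---the subgradient inequality only gives the lower bound $g(y_{n+1})\ge -g(y_n)$, which is the wrong direction, and there is no free upper bound. Your closing sentence (``reducing the claim to the classical finite convergence of the relaxation method, which is the substance of the cited result'') is circular: it appeals to precisely the theorem you are trying to prove. So as written, your argument establishes finite termination under the additional hypothesis that $g$ is $C^{2}$ (or piecewise affine with a locally stable active piece), but not for general convex $g$; completing the general case is genuinely the nontrivial content of \cite{KF84}.
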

\begin{pro}\cite{Kon98}\label{pro04} Let $x\in \mathbb{R}^{n}$, we have
\begin{align*}
\|R(x)-y\|\leq\|x-y\|, \,\,\,\ \forall y\in C, \,\,\,\ R(x)\in C.
\end{align*}
\end{pro}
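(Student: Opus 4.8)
The plan is to prove the two assertions in turn: that $R(x)\in C$, and that the map $x\mapsto R(x)$ moves no point of $C$ farther away. First I would dispose of $R(x)\in C$. If $x\in C$, Step~0 sets $R(x)=x$, so $R(x)\in C$ trivially. If $x\notin C$, then by Proposition~\ref{pro03} the inner loop executes only finitely many iterations, so it must terminate through Step~2; but Step~2 outputs $R(x)=y_{n+1}$ only under the condition $y_{n+1}\in C$. Hence $R(x)\in C$ in every case. (This is also where Proposition~\ref{pro03} is needed, to guarantee that ``$R(x)$'' is well defined at all.)

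For the distance estimate, fix $y\in C$, i.e.\ $g(y)\le 0$. The heart of the argument is a single-step Fej\'er inequality: I claim that whenever Step~1 is executed at an iterate $y_n$, one has $\|y_{n+1}-y\|\le\|y_n-y\|$. Granting this, chaining the inequalities along the finite run $y_0=x,y_1,\dots,y_N=R(x)$ gives $\|R(x)-y\|\le\|x-y\|$ (and when $x\in C$ there is no run, so the inequality is the trivial equality). To prove the one-step claim, note that Step~1 is reached at $y_n$ only because the previous stopping test failed (or, for $n=0$, because $x\notin C$), so $y_n\notin C$, i.e.\ $g(y_n)>0$. Taking $w_n\in\partial g(y_n)$ and applying the subgradient inequality at the feasible point $y$,
\[
0\ \ge\ g(y)\ \ge\ g(y_n)+\langle w_n,\,y-y_n\rangle
\qquad\Longrightarrow\qquad
\langle w_n,\,y_n-y\rangle\ \ge\ g(y_n)\ >\ 0 .
\]
In particular $w_n\neq 0$ (using $g(y)\le 0<g(y_n)$), so the update $y_{n+1}=y_n-\lambda\,g(y_n)\,w_n/\|w_n\|^2$ of Step~1 (here with $\lambda=2$) is well defined.

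Finally I would expand the squared norm,
\[
\|y_{n+1}-y\|^2=\|y_n-y\|^2-\frac{2\lambda g(y_n)}{\|w_n\|^2}\,\langle w_n,\,y_n-y\rangle+\frac{\lambda^2 g(y_n)^2}{\|w_n\|^2},
\]
substitute the bound $\langle w_n,y_n-y\rangle\ge g(y_n)$ into the middle term, and collect terms to get
\[
\|y_{n+1}-y\|^2\ \le\ \|y_n-y\|^2-\lambda(2-\lambda)\,\frac{g(y_n)^2}{\|w_n\|^2}\ \le\ \|y_n-y\|^2 ,
\]
the last inequality because $0<\lambda\le 2$. This establishes the one-step claim and finishes the proof.

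The argument is largely routine; the points that actually require care are invoking Proposition~\ref{pro03} so that the procedure really terminates, checking $w_n\neq 0$ before dividing, and telescoping cleanly over the a priori unknown number of inner iterations. So there is no serious obstacle here — the only mildly delicate step is recognizing that the convexity of $g$ together with $y\in C$ is exactly what forces $\langle w_n,y_n-y\rangle\ge g(y_n)$, which drives the whole estimate.
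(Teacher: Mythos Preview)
Your proof is correct. The paper itself does not prove Proposition~\ref{pro04}; it simply imports the result from \cite{Kon98}, so there is no in-paper argument to compare against. What you have written is precisely the classical Fej\'er-monotonicity argument for the relaxed subgradient projection step (with relaxation parameter $\lambda=2$) that underlies Konnov's procedure, and it is the standard route: feasibility of $R(x)$ follows from the stopping rule plus finite termination (Proposition~\ref{pro03}), and the distance inequality follows by telescoping the one-step estimate $\|y_{n+1}-y\|^2\le\|y_n-y\|^2-\lambda(2-\lambda)g(y_n)^2/\|w_n\|^2$, which in turn rests on the subgradient inequality $\langle w_n,y_n-y\rangle\ge g(y_n)-g(y)\ge g(y_n)$. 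All the side checks you flag (well-definedness of $R(x)$, $w_n\neq 0$, the case $x\in C$) are handled correctly.
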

\begin{algo}\label{alg}
Choose $\tilde{x}_{0}\in \mathbb{R}^{n}$, $\tilde{x}_{1}\in \mathbb{R}^{n}$ and two parameters $\mu,\gamma\in(0,1)$. Set $n=1$

Step 1. Apply Procedure A with $x=\tilde{x}_{0}$ and set $x_{0}=R(\tilde{x}_{0})$.

Step 2. Apply Procedure A with $x=\tilde{x}_{n}$ and set $x_{n}=R(\tilde{x}_{n})$.

Step 3. Let $w_{n}=x_{n}+\alpha_{n}(x_{n}-x_{n-1})$, choose $u_{n}\in A(w_{n})$, and compute
\begin{align*}
y_{n}=P_{C}(w_{n}-\lambda_{n}u_{n}),
\end{align*}
where $\lambda_{n}=\gamma^{m_{n}}$ and $m_{n}$ is the smallest nonnegative integer $m$ such that
\begin{align}
\nu_{n}\in A(P_{C}(w_{n}-\gamma^{m}u_{n})).\label{ZZ02}
\end{align}
\begin{align}
\gamma^{m}\|u_{n}-\nu_{n}\|\leq\mu\|r_{\gamma^{m}}(w_{n},u_{n})\|.\label{ZZ03}
\end{align}
If $r_{\lambda_{n}}(w_{n},u_{n})=0$, then stop.

Step 4. Compute
\begin{align}
\tilde{x}_{n+1}=y_{n}-\lambda_{n}(\nu_{n}-u_{n}).\label{Z03a}
\end{align}
Set $n: = n + 1$ and return to Step 2.
\end{algo}
We first show that Algorithm \ref{alg} is well defined.\\

\begin{lem} Suppose that the assumption \eqref{Z03} holds, then for any $\gamma\in(0,1)$ and $x_{n}\in C$, the linesearch procedure in Algrithm \ref{alg} is well defined.
\end{lem}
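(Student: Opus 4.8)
The plan is to argue by contradiction, assuming the linesearch never terminates: for every nonnegative integer $m$ and every $\nu\in A(P_{C}(w_{n}-\gamma^{m}u_{n}))$ one has $\gamma^{m}\|u_{n}-\nu\|>\mu\|r_{\gamma^{m}}(w_{n},u_{n})\|$. I would first clear away the degenerate case $r_{1}(w_{n},u_{n})=0$: by Proposition \ref{pro02} this forces $r_{\gamma^{m}}(w_{n},u_{n})=0$ and $P_{C}(w_{n}-\gamma^{m}u_{n})=w_{n}$ for all $m$, so taking $m=0$ and $\nu_{n}=u_{n}\in A(w_{n})$ satisfies \eqref{ZZ02}--\eqref{ZZ03} with both sides equal to zero, and the linesearch stops at $m_{n}=0$. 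Hence I may assume $r_{1}(w_{n},u_{n})\neq0$; since $0<\gamma^{m}\le1$, Proposition \ref{pro02} then gives $\|r_{\gamma^{m}}(w_{n},u_{n})\|\ge\gamma^{m}\|r_{1}(w_{n},u_{n})\|>0$ for every $m$.

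Next I let $m\to\infty$. As $\gamma\in(0,1)$, $\gamma^{m}u_{n}\to0$, so continuity of $P_{C}$ gives $P_{C}(w_{n}-\gamma^{m}u_{n})\to P_{C}(w_{n})$ and $r_{\gamma^{m}}(w_{n},u_{n})=w_{n}-P_{C}(w_{n}-\gamma^{m}u_{n})\to w_{n}-P_{C}(w_{n})$. If $w_{n}\notin C$, this limit is nonzero, so the right-hand side $\mu\|r_{\gamma^{m}}(w_{n},u_{n})\|$ is bounded below by a positive constant for $m$ large; on the other hand, upper semicontinuity of $A$ together with compactness of its values makes $A$ locally bounded near $P_{C}(w_{n})$, so $\|\nu\|$ is uniformly bounded for $\nu\in A(P_{C}(w_{n}-\gamma^{m}u_{n}))$ with $m$ large, whence $\gamma^{m}\|u_{n}-\nu\|\to0$. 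For $m$ large this contradicts $\gamma^{m}\|u_{n}-\nu\|>\mu\|r_{\gamma^{m}}(w_{n},u_{n})\|$.

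It remains to treat $w_{n}\in C$, where $P_{C}(w_{n})=w_{n}$ and the crude estimate above collapses since $\|r_{\gamma^{m}}(w_{n},u_{n})\|\to0$. Here I would invoke lower semicontinuity of $A$ at $w_{n}$: because $P_{C}(w_{n}-\gamma^{m}u_{n})\to w_{n}$ and $u_{n}\in A(w_{n})$, there exist $\nu_{m}\in A(P_{C}(w_{n}-\gamma^{m}u_{n}))$ with $\nu_{m}\to u_{n}$, hence $\|u_{n}-\nu_{m}\|\to0$. Combining this with $\|r_{\gamma^{m}}(w_{n},u_{n})\|\ge\gamma^{m}\|r_{1}(w_{n},u_{n})\|$ and $r_{1}(w_{n},u_{n})\neq0$, for $m$ large we get $\gamma^{m}\|u_{n}-\nu_{m}\|\le\mu\gamma^{m}\|r_{1}(w_{n},u_{n})\|\le\mu\|r_{\gamma^{m}}(w_{n},u_{n})\|$, so $\nu_{m}$ meets \eqref{ZZ02}--\eqref{ZZ03}, again a contradiction. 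In all cases the linesearch stops after finitely many steps, so $m_{n}$, and hence $\lambda_{n}=\gamma^{m_{n}}$, is well defined.

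The main obstacle is precisely the case $w_{n}\in C$: there both sides of \eqref{ZZ03} tend to $0$, so one cannot rely on boundedness alone but must compare the decay rate of the residual $\|r_{\gamma^{m}}(w_{n},u_{n})\|$ — controlled from below by $\gamma^{m}\|r_{1}(w_{n},u_{n})\|$ through Proposition \ref{pro02} — against the rate at which a suitably chosen selection $\nu_{m}\in A(P_{C}(w_{n}-\gamma^{m}u_{n}))$ converges to $u_{n}$, and it is here that the continuity (specifically lower semicontinuity) of $A$ and the compactness of its values are indispensable.
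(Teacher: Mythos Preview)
Your proposal is correct and follows essentially the same route as the paper: dispose of the degenerate case $r_{1}(w_{n},u_{n})=0$ via Proposition~\ref{pro02}, then argue by contradiction, splitting into the cases $w_{n}\in C$ (handled by lower semicontinuity of $A$ to produce a selection $\nu_{m}\to u_{n}$) and $w_{n}\notin C$ (handled by local boundedness of $A$ so that $\gamma^{m}\|u_{n}-\nu\|\to 0$ while $\mu\|r_{\gamma^{m}}(w_{n},u_{n})\|\to\mu\|w_{n}-P_{C}(w_{n})\|>0$). The only cosmetic difference is that the paper first divides \eqref{ZZ04} by $\gamma^{m}$ and applies Proposition~\ref{pro02} to obtain the $m$-free inequality $\|u_{n}-\nu\|>\mu\|r_{1}(w_{n},u_{n})\|$ before passing to the limit in case~(i), whereas you keep the factor $\gamma^{m}$ and compare rates directly; the two arguments are equivalent.
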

\begin{proof} If $r_{1}(w_{n},u_{n})=0$, then by Proposition \ref{pro02} we have $r_{\gamma^{m}}(w_{n},u_{n})=0$, i.e., $w_n=P_C(w_n-\gamma^mu_n)$ and hence we can take $\nu_n=u_n$ which satisfies \eqref{ZZ02} and \eqref{ZZ03}.\\
Assume now that $\|r_{1}(w_{n},u_{n})\|>0$. Suppose that for all $m$ and $\nu\in A(y_{m})=A(P_{C}(w_{n}-\gamma^{m}u_{n}))$, we have
\begin{align}
&\gamma^{m}\|u_{n}-\nu\|>\mu\|r_{\gamma^{m}}(w_{n},u_{n})\|,\label{ZZ04}
\end{align}\\
i.e.,
\begin{align}
\|u_{n}-\nu\|>\frac{\mu}{\gamma^{m}}\|r_{\gamma^{m}}(w_{n},u_{n})\|\geq\frac{\mu}{\gamma^{m}}\min\{1,\gamma^{m}\}\|r_{1}(w_{n},u_{n})\|=\mu\|r_{1}(w_{n},u_{n})\|,\label{ZZ04a}
\end{align}
where the second inequality follows from Proposition \ref{pro02} and the equality follows from $\gamma\in(0,1)$ and $m\geq0$.

We now consider the two cases, $w_{n}\in C$ and $w_{n}\notin C$.

(i) If $w_{n}\in C$. Since $P_{C}(\cdot)$ is continuous, $y_{m}=P_{C}(w_{n}-\gamma^{m}u_{n})\rightarrow w_{n}(m\rightarrow \infty)$. Since $A$ is lower semicontinuous, $u_{n}\in A(w_{n})$ and $y_{m}\rightarrow w_{n}(m\rightarrow\infty)$, there is $\nu_{m}\in A(y_{m})$ such that $\nu_{m}\rightarrow u_{n}(m\rightarrow \infty)$. Therefore, from \eqref{ZZ04} we have
\begin{align}
\|u_{n}-\nu_{m}\|>\mu\|r_{1}(w_{n},u_{n})\|,\,\,\,\,\ \forall m.\label{ZZ05}
\end{align}
Letting $m\rightarrow \infty$ in \eqref{ZZ05}, we have
\begin{align*}
0=\|u_{n}-u_{n}\|\geq\mu\|r_{1}(w_{n},u_{n})\|>0.
\end{align*}
This is a contradiction.

(ii) If $w_{n}\notin C$, then $\|r_{\gamma^{m}}(w_{n},u_{n})\|\rightarrow\|w_{n}-P_{C}(w_{n})\|\neq0(m\rightarrow\infty)$. Letting $m\rightarrow\infty$ in \eqref{ZZ04}, we have
\begin{align*}
0=\gamma^{m}\|u_{n}-\nu\|\geq\mu\|w_{n}-P_{C}(w_{n})\|>0,
\end{align*}
being $A$ continuous. This is a contradiction. Thus, Algorithm \ref{alg} is well defined and implementable.\\
\end{proof}

Next we show that the stopping criterion in Step 3 is valid.
\begin{lem}\label{Lem3.1} If $r_{\lambda_{n}}(w_{n},u_{n})=0$ in Algorithm 3.1, then $w_{n}\in S$.
\end{lem}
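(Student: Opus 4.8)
The plan is to unwind the definition of the stopping criterion and apply Proposition~\ref{pro01}. Suppose $r_{\lambda_n}(w_n,u_n)=0$ in Step~3. By the definition $r_{\mu}(x,w):=x-P_C(x-\mu w)$, this says precisely that $w_n = P_C(w_n-\lambda_n u_n)$. In particular $w_n \in C$, since the projection $P_C$ always lands in $C$.

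Next I would identify the multiplier $u_n$ as a valid certificate. Recall that in Step~3 we chose $u_n\in A(w_n)$, so we already have a point of $A(w_n)$ in hand. Moreover, since $r_{\lambda_n}(w_n,u_n)=0$ we also have $r_{\gamma^m}(w_n,u_n)=0$ for the value $m=m_n$; and by Proposition~\ref{pro02}, $\|r_1(w_n,u_n)\| = 0$ as well, which makes the linesearch trivially satisfied with $\nu_n=u_n$ as already noted in the previous lemma. So the only substantive thing to check is that $(w_n,u_n)$ solves $MVI(A,C)$.

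Now I would apply Proposition~\ref{pro01} directly: it states that $x\in C$ and $w\in A(x)$ solve \eqref{z01} if and only if $r_\mu(x,w)=x-P_C(x-\mu w)=0$. Taking $x=w_n$, $w=u_n$, $\mu=\lambda_n>0$, the hypothesis $r_{\lambda_n}(w_n,u_n)=0$ together with $w_n\in C$ and $u_n\in A(w_n)$ gives exactly that $w_n$, $u_n$ solve \eqref{z01}. Hence $w_n\in S$ by the definition of $S$ as the solution set of $MVI(A,C)$.

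I do not expect any real obstacle here: this is essentially a one-line consequence of Proposition~\ref{pro01}, with the only minor point being to observe that $w_n\in C$ automatically (so that the hypothesis of Proposition~\ref{pro01} that $x\in C$ is met) and that $u_n\in A(w_n)$ was part of the construction in Step~3. The proof is therefore short and formal rather than computational.
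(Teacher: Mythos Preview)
Your proposal is correct and follows essentially the same route as the paper: unwind $r_{\lambda_n}(w_n,u_n)=0$ to get $w_n=P_C(w_n-\lambda_n u_n)$, then invoke Proposition~\ref{pro01} with $x=w_n$, $w=u_n\in A(w_n)$, $\mu=\lambda_n>0$. The extra remarks about the linesearch and $\nu_n=u_n$ are harmless but unnecessary for this lemma.
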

\begin{proof} If $r_{\lambda_{n}}(w_{n},u_{n})=0$, then $w_{n}=P_{C}(w_{n}-\lambda_{n}u_{n})$. Since $\lambda_{n}>0$, it follows from Proposition \ref{pro01} that $w_{n}\in S$.
\end{proof}


The following two lemmas play an important role in proving the convergence of Algorithm \ref{alg}.
\begin{lem}\label{Lem4.1}
Let $\{x_{n}\}$ be a sequence generated by Algorithm 3.1. Then for every $x^*\in S$��
\begin{align}\label{Z04}
\parallel x_{n+1}-x^*\parallel^{2}\leq\parallel w_{n}-x^*\parallel^{2}-(1-\mu^{2})\parallel r_{\lambda_{n}}(w_{n},u_{n})\parallel^{2}
\end{align}
\end{lem}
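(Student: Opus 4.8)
The plan is to first strip off the outer iteration map $R$ and then run a Tseng-type estimate on the auxiliary point $\tilde x_{n+1}$; the inertial structure of $w_n$ will play no role in this particular lemma.

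First, since $x_{n+1}=R(\tilde x_{n+1})$ and $x^\ast\in S\subseteq C$, Proposition \ref{pro04} gives $\|x_{n+1}-x^\ast\|\le\|\tilde x_{n+1}-x^\ast\|$, so it will be enough to prove the claimed bound with $x_{n+1}$ replaced by $\tilde x_{n+1}$. Writing $r:=r_{\lambda_n}(w_n,u_n)=w_n-y_n$ and recalling from Step 4 that $\tilde x_{n+1}=y_n-\lambda_n(\nu_n-u_n)$, I would expand
\[
\|\tilde x_{n+1}-x^\ast\|^2=\|y_n-x^\ast\|^2-2\lambda_n\langle y_n-x^\ast,\nu_n-u_n\rangle+\lambda_n^2\|\nu_n-u_n\|^2 .
\]

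Next I would control $\|y_n-x^\ast\|^2$ via the obtuse-angle property of the projection. Applying Lemma \ref{Lem2.1}(i) to $y_n=P_C(w_n-\lambda_n u_n)$ with $z=x^\ast\in C$ gives $\langle w_n-\lambda_n u_n-y_n,\,x^\ast-y_n\rangle\le 0$, that is, $\langle w_n-y_n,\,y_n-x^\ast\rangle\ge\lambda_n\langle u_n,\,y_n-x^\ast\rangle$. Combining this with the identity $\|y_n-x^\ast\|^2=\|w_n-x^\ast\|^2-2\langle w_n-x^\ast,\,w_n-y_n\rangle+\|w_n-y_n\|^2$ and the splitting $w_n-x^\ast=(w_n-y_n)+(y_n-x^\ast)$ (together with $\|w_n-y_n\|=\|r\|$) yields
\[
\|y_n-x^\ast\|^2\le\|w_n-x^\ast\|^2-\|r\|^2-2\lambda_n\langle u_n,\,y_n-x^\ast\rangle .
\]
Substituting into the expansion, the two first-order terms collapse, because $-2\lambda_n\langle u_n,y_n-x^\ast\rangle-2\lambda_n\langle\nu_n-u_n,y_n-x^\ast\rangle=-2\lambda_n\langle\nu_n,\,y_n-x^\ast\rangle$. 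Since $\nu_n\in A(y_n)$ and $y_n\in C$, property \eqref{Z03} gives $\langle\nu_n,\,y_n-x^\ast\rangle\ge0$, so this term is nonpositive and may be dropped, leaving $\|\tilde x_{n+1}-x^\ast\|^2\le\|w_n-x^\ast\|^2-\|r\|^2+\lambda_n^2\|\nu_n-u_n\|^2$. Finally the linesearch inequality \eqref{ZZ03}, namely $\lambda_n\|u_n-\nu_n\|\le\mu\|r_{\lambda_n}(w_n,u_n)\|=\mu\|r\|$, gives $\lambda_n^2\|\nu_n-u_n\|^2\le\mu^2\|r\|^2$; plugging this in and passing back through $R$ produces \eqref{Z04}.

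The calculations are essentially routine. The main points requiring care will be (a) applying the projection inequality at the correct argument $w_n-\lambda_n u_n$ rather than at $w_n$, and (b) noticing that the cross terms in $u_n$ telescope into a single inner product $\langle\nu_n,\,y_n-x^\ast\rangle$, which is precisely the slot where property \eqref{Z03} — i.e.\ pseudomonotonicity of $A$ — comes into play.
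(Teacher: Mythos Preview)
Your proof is correct and follows essentially the same route as the paper's: expand $\|\tilde x_{n+1}-x^\ast\|^2$ from \eqref{Z03a}, use the obtuse-angle property of the projection (Lemma \ref{Lem2.1}(i)) at $w_n-\lambda_n u_n$ to produce the term $-2\lambda_n\langle u_n,y_n-x^\ast\rangle$, combine it with the Tseng cross term into $-2\lambda_n\langle\nu_n,y_n-x^\ast\rangle$, drop this via \eqref{Z03}, bound $\lambda_n^2\|\nu_n-u_n\|^2$ by $\mu^2\|r\|^2$ via \eqref{ZZ03}, and pass from $\tilde x_{n+1}$ to $x_{n+1}$ through Proposition \ref{pro04}. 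The only cosmetic difference is that you invoke Proposition \ref{pro04} at the start, whereas the paper does it as the last step.
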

\begin{proof}From \eqref{Z03a} we have
\begin{equation}\label{Z05}
\begin{split}
\|\tilde{x}_{n+1}-x^*\|^{2}&=\|y_{n}-\lambda_{n}(\nu_{n}-u_{n})-x^*\|^{2}\\
&=\|y_{n}-x^*\|^2+\lambda_{n}^2\|\nu_{n}-u_{n}\|^2-2\lambda_{n}\langle y_{n}-x^*,\nu_{n}-u_{n}\rangle\\
&=\|w_{n}-x^*\|^2+\|w_{n}-y_{n}\|^2+2\langle y_{n}-w_{n},w_{n}-x^*\rangle+\lambda_{n}^2\|\nu_{n}-u_{n}\|^2-2\lambda_{n}\langle y_{n}-x^*,\nu_{n}-u_{n}\rangle\\
&=\|w_{n}-x^*\|^2+\|w_{n}-y_{n}\|^2-2\langle y_{n}-w_{n},y_{n}-w_{n}\rangle+2\langle y_{n}-w_{n},y_{n}-x^*\rangle\\
&+\lambda_{n}^2\|\nu_{n}-u_{n}\|^2-2\lambda_{n}\langle y_{n}-x^*,\nu_{n}-u_{n}\rangle\\
&=\|w_{n}-x^*\|^2-\|w_{n}-y_{n}\|^2+2\langle y_{n}-w_{n},y_{n}-p\rangle
+\lambda_{n}^2\|\nu_{n}-u_{n}\|^2-2\lambda_{n}\langle y_{n}-x^*,\nu_{n}-u_{n}\rangle.\\
\end{split}
\end{equation}
Since $y_{n}=P_{C}(w_{n}-\lambda_{n}u_{n})$,
\begin{align*}
\langle y_{n}-w_{n}+\lambda_{n}u_{n},y_{n}-x^*\rangle\leq0,
\end{align*}
or equivalently
\begin{equation}\label{Z06}
\begin{split}
\langle y_{n}-w_{n},y_{n}-x^*\rangle\leq-\lambda_{n}\langle u_{n},y_{n}-x^*\rangle.\\
\end{split}
\end{equation}
Form \eqref{Z05} and \eqref{Z06}, we get
 \begin{equation}\label{Z07}
\begin{split}
\|\tilde{x}_{n+1}-x^*\|^{2}&\leq\|w_{n}-x^*\|^{2}-\|w_{n}-y_{n}\|^{2}-2\lambda_{n}\langle u_{n},y_{n}-x^*\rangle+\lambda_{n}^{2}\|\nu_{n}-u_{n}\|^{2}-2\lambda_{n}\langle y_{n}-x^*,\nu_{n}-u_{n}\rangle\\
&=\|w_{n}-x^*\|^{2}-\|w_{n}-y_{n}\|^{2}+\lambda_{n}^{2}\|\nu_{n}-u_{n}\|^{2}-2\lambda_{n}\langle y_{n}-x^*,\nu_{n}\rangle\\
&\leq\|w_{n}-x^*\|^{2}-\|r_{\lambda_{n}}(w_{n},u_{n})\|^{2}+\mu^{2}\|r_{\lambda_{n}}(w_{n},u_{n})\|^{2}-2\lambda_{n}\langle y_{n}-x^*,\nu_{n}\rangle\\
&\leq\|w_{n}-x^*\|^{2}-(1-\mu^{2})\|r_{\lambda_{n}}(w_{n},u_{n})\|^{2}-2\lambda_{n}\langle y_{n}-x^*,\nu_{n}\rangle.\\
\end{split}
\end{equation}
Since $\nu_{n}\in A(y_{n})$ and $ x^*\in S$, it follows from \eqref{Z03} that
 \begin{equation}\label{Z08}
\begin{split}
\langle \nu_{n},y_{n}-x^*\rangle\geq0.\\
\end{split}
\end{equation}
Combining \eqref{Z07} and \eqref{Z08}, we have
\begin{align*}
\|\tilde{x}_{n+1}-x^*\|^{2}\leq\|w_{n}-x^*\|^{2}-(1-\mu^{2})\|r_{\lambda_{n}}(w_{n},u_{n})\|^{2},\\
\end{align*}
and hence from Proposition \ref{pro04} we get
\begin{align*}
\|x_{n+1}-x^*\|^{2}\leq\|\tilde{x}_{n+1}-x^*\|^{2}\leq\|w_{n}-x^*\|^{2}-(1-\mu^{2})\|r_{\lambda_{n}}(w_{n},u_{n})\|^{2}.
\end{align*}
This completes the proof.
\end{proof}

\begin{lem}\label{Th4.1} Assume that the sequence $\{\alpha_{n}\}$ is non-decreasing satisfying $0\leq\alpha_{n}\leq\alpha$ and
 \begin{equation}\label{Z09}
\begin{split}
\alpha<1-\frac{4}{\sqrt{8\tau+1}+3},\\
\end{split}
\end{equation}
where $\tau=\frac{2}{\mu+1}-1$, and that $x^*\in S$. Then,
\begin{itemize}
\item[(1)]$\lim_{n\rightarrow\infty}\|x_{n}-x^*\|$ exists;
\item[(2)]$\lim_{n\rightarrow\infty}\|w_{n}-x_{n}\|=0$.
\end{itemize}
\end{lem}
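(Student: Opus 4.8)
The plan is to combine the inertial recursion with the descent inequality from Lemma \ref{Lem4.1} and then invoke the Alber-type summation lemma, Lemma \ref{Lem2.3}. First I would fix $x^\ast\in S$, set $\varphi_n:=\|x_n-x^\ast\|^2$, and estimate $\|w_n-x^\ast\|^2$ in terms of $\varphi_n$ and $\varphi_{n-1}$. Using $w_n=x_n+\alpha_n(x_n-x_{n-1})$ and Lemma \ref{Lem2.2} (with the convex combination written as $w_n-x^\ast=(1+\alpha_n)(x_n-x^\ast)-\alpha_n(x_{n-1}-x^\ast)$), one gets
\[
\|w_n-x^\ast\|^2=(1+\alpha_n)\varphi_n-\alpha_n\varphi_{n-1}+\alpha_n(1+\alpha_n)\|x_n-x_{n-1}\|^2 .
\]
Plugging this into \eqref{Z04} yields
\[
\varphi_{n+1}\le \varphi_n+\alpha_n(\varphi_n-\varphi_{n-1})+\alpha_n(1+\alpha_n)\|x_n-x_{n-1}\|^2-(1-\mu^2)\|r_{\lambda_n}(w_n,u_n)\|^2 .
\]

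The main obstacle is controlling the term $\alpha_n(1+\alpha_n)\|x_n-x_{n-1}\|^2$, which is \emph{not} obviously summable and has the wrong sign to be absorbed directly. The standard remedy, and the reason for the precise bound \eqref{Z09} on $\alpha$, is to prove a one-step estimate for the auxiliary sequence $\Gamma_n:=\varphi_n-\alpha_n\varphi_{n-1}+\delta\|x_n-x_{n-1}\|^2$ for a suitable constant $\delta>0$. I would bound $\|x_n-x_{n-1}\|^2=\|\tilde x_n-\tilde x_{n-1}\|^2$ using Proposition \ref{pro04} together with the nonexpansive-type relations coming from $\tilde x_{n}=y_{n-1}-\lambda_{n-1}(\nu_{n-1}-u_{n-1})$, and then choose $\delta$ so that the coefficient $\tau=\frac{2}{\mu+1}-1$ appears; the quadratic inequality $\alpha^2+3\alpha-(1-2\tau)\le 0$ (equivalently $\alpha<1-\frac{4}{\sqrt{8\tau+1}+3}$) is exactly what makes $\Gamma_n$ a nonincreasing sequence up to a nonnegative remainder, so that
\[
\Gamma_{n+1}\le \Gamma_n-c\,\|x_n-x_{n-1}\|^2-(1-\mu^2)\|r_{\lambda_n}(w_n,u_n)\|^2
\]
for some $c>0$. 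Since $\alpha_n\le\alpha<1$, $\Gamma_n\ge \varphi_n-\alpha\varphi_{n-1}$, and one shows $\Gamma_n$ is bounded below, hence $\{\varphi_n\}$ is bounded and $\Gamma_n$ converges.

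From convergence of $\Gamma_n$ and the displayed inequality, telescoping gives $\sum_n\|x_n-x_{n-1}\|^2<\infty$ and $\sum_n\|r_{\lambda_n}(w_n,u_n)\|^2<\infty$; in particular $\|x_n-x_{n-1}\|\to0$. To finish part (1) I would rewrite the recursion in the exact form required by Lemma \ref{Lem2.3}, namely $\varphi_{n+1}\le\varphi_n+\alpha_n(\varphi_n-\varphi_{n-1})+\theta_n$ with $\theta_n:=\alpha_n(1+\alpha_n)\|x_n-x_{n-1}\|^2$, which is now summable; since $0\le\alpha_n\le\alpha<1$, Lemma \ref{Lem2.3}(ii) gives that $\lim_n\varphi_n=\lim_n\|x_n-x^\ast\|^2$ exists, proving (1). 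For part (2), note $w_n-x_n=\alpha_n(x_n-x_{n-1})$, so $\|w_n-x_n\|=\alpha_n\|x_n-x_{n-1}\|\le\alpha\|x_n-x_{n-1}\|\to0$, which is immediate once $\|x_n-x_{n-1}\|\to0$ has been established. The delicate point throughout is the bookkeeping in the second paragraph: getting the constant $\delta$ and the threshold on $\alpha$ to match \eqref{Z09}, for which the monotonicity assumption on $\{\alpha_n\}$ is used to guarantee $\alpha_{n+1}\|x_n-x_{n-1}\|^2$-type cross terms have the right sign.
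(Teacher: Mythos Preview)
Your overall architecture is right---an auxiliary Lyapunov sequence, a monotonicity estimate driven by the bound \eqref{Z09}, telescoping to get $\sum\|x_{n+1}-x_n\|^2<\infty$, and finally Lemma \ref{Lem2.3}---and this is exactly what the paper does. But the ``bookkeeping in the second paragraph'' that you flag as delicate is in fact the heart of the proof, and your sketch of it does not work as written.

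The recursion you obtain,
\[
\varphi_{n+1}\le \varphi_n+\alpha_n(\varphi_n-\varphi_{n-1})+\alpha_n(1+\alpha_n)\|x_n-x_{n-1}\|^2-(1-\mu^2)\|r_{\lambda_n}(w_n,u_n)\|^2,
\]
contains no $\|x_{n+1}-x_n\|^2$ term, so there is nothing for the increment $\delta\|x_{n+1}-x_n\|^2$ in $\Gamma_{n+1}$ to be dominated by; you cannot get $\Gamma_{n+1}\le\Gamma_n-c\|x_n-x_{n-1}\|^2$ from this inequality alone. The missing step---and the place where $\tau=\frac{1-\mu}{1+\mu}$ actually enters---is a \emph{lower} bound on the residual: from $\tilde x_{n+1}=y_n-\lambda_n(\nu_n-u_n)$ and \eqref{ZZ03} one has $\|\tilde x_{n+1}-w_n\|\le(1+\mu)\|r_{\lambda_n}(w_n,u_n)\|$, hence $(1-\mu^2)\|r_{\lambda_n}\|^2\ge\tau\|\tilde x_{n+1}-w_n\|^2\ge\tau\|x_{n+1}-w_n\|^2$ by Proposition \ref{pro04}. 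Expanding $\|x_{n+1}-w_n\|^2$ then produces $(1-\alpha_n)\|x_{n+1}-x_n\|^2+(\alpha_n^2-\alpha_n)\|x_n-x_{n-1}\|^2$, and it is this $(1-\alpha_n)\|x_{n+1}-x_n\|^2$ term that drives the descent of the Lyapunov sequence. Your proposed route---bounding $\|x_n-x_{n-1}\|^2$ via ``$\|x_n-x_{n-1}\|^2=\|\tilde x_n-\tilde x_{n-1}\|^2$'' and Proposition \ref{pro04}---is not available: Proposition \ref{pro04} gives only $\|R(x)-y\|\le\|x-y\|$ for $y\in C$, not an equality, and $\tilde x_{n-1}$ need not lie in $C$.

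Two smaller points. First, with the correct mechanism the relevant quadratic is $(1-\tau)\alpha^2+(1+2\tau)\alpha-\tau<0$, whose positive root is $1-\tfrac{4}{\sqrt{8\tau+1}+3}$; your quadratic $\alpha^2+3\alpha-(1-2\tau)\le 0$ is not equivalent (for $\tau$ close to $1$ it has no positive solutions at all). Second, the paper takes $\delta_n=\alpha_n(1+\alpha_n)+\tau(\alpha_n-\alpha_n^2)$ rather than a constant $\delta$, and it is here that the nondecreasing assumption on $\{\alpha_n\}$ is used, to pass from $\alpha_{n+1}$ to $\alpha_n$ in $\Phi_{n+1}-\Phi_n$. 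Once these corrections are made, your final paragraph (invoking Lemma \ref{Lem2.3} with $\theta_n=\alpha_n(1+\alpha_n)\|x_n-x_{n-1}\|^2$ and reading off $\|w_n-x_n\|=\alpha_n\|x_n-x_{n-1}\|\to0$) is exactly right.
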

\begin{proof} By the definition of $\tilde{x}_{n+1}$, we have
\begin{align*}
\|\tilde{x}_{n+1}-y_{n}\|&=\|y_{n}-\lambda_{n}(\nu_{n}-u_{n})-y_{n}\|\\
&\leq\lambda_{n}\|\nu_{n}-u_{n}\|\\
&\leq\mu\|r_{\lambda_{n}}(w_{n},u_{n})\|.\\
\end{align*}
Therefore,
\begin{align*}
\|\tilde{x}_{n+1}-w_{n}\|&\leq\|\tilde{x}_{n+1}-y_{n}\|+\|y_{n}-w_{n}\|\\
&\leq(1+\mu)\|r_{\lambda_{n}}(w_{n},u_{n})\|,\\
\end{align*}
which implies
\begin{equation}\label{Z10}
\begin{split}
\|r_{\lambda_{n}}(w_{n},u_{n})\|\geq\frac{1}{1+\mu}\|\tilde{x}_{n+1}-w_{n}\|.\\
\end{split}
\end{equation}
Let $x^*\in S$ . By Lemma \ref{Lem4.1}, we have
\begin{equation}\label{Z11}
\begin{split}
\|x_{n+1}-x^*\|^{2}\leq\|w_{n}-x^*\|^{2}-(1-\mu^{2})\|r_{\lambda_{n}}(w_{n},u_{n})\|^{2}.
\end{split}
\end{equation}
From \eqref{Z10} and \eqref{Z11}, we have
\begin{equation}\label{Z12}
\begin{split}
\|x_{n+1}-x^*\|^{2}&\leq\|w_{n}-x^*\|^{2}-\frac{1-\mu^{2}}{(1+\mu)^{2}}\|\tilde{x}_{n+1}-w_{n}\|^{2}\\
&=\|w_{n}-x^*\|^{2}-(\frac{2}{\mu+1}-1)\|\tilde{x}_{n+1}-w_{n}\|^{2}\\
&=\|w_{n}-x^*\|^{2}-\tau\|\tilde{x}_{n+1}-w_{n}\|^{2}\\
&\leq\|w_{n}-x^*\|^{2}-\tau\|{x}_{n+1}-w_{n}\|^{2}.\\
\end{split}
\end{equation}
By the definition of $w_{n}$, we have
\begin{equation}\label{Z13}
\begin{split}
\|w_{n}-x^*\|^{2}&=\|x_{n}+\alpha_{n}(x_{n}-x_{n-1})-x^*\|^{2}\\
&=\|(1+\alpha_{n})(x_{n}-x^*)-\alpha_{n}(x_{n-1}-x^*)\|^{2}\\
&=(1+\alpha_{n})\|x_{n}-x^*\|^{2}-\alpha_{n}\|x_{n-1}-x^*\|^{2}+\alpha_{n}(1+\alpha_{n})\|x_{n}-x_{n-1}\|^{2}.\\
\end{split}
\end{equation}
Thus, it follows from \eqref{Z12} and \eqref{Z13} that
\begin{equation}\label{Z14}
\begin{split}
\|x_{n+1}-x^*\|^{2}&\leq(1+\alpha_{n})\|x_{n}-x^*\|^{2}-\alpha_{n}\|x_{n-1}-x^*\|^{2}+\alpha_{n}(1+\alpha_{n})\|x_{n}-x_{n-1}\|^{2}\\
&\leq(1+\alpha_{n})\|x_{n}-x^*\|^{2}-\alpha_{n}\|x_{n-1}-x^*\|^{2}+2\alpha\|x_{n}-x_{n-1}\|^{2}.\\
\end{split}
\end{equation}
Also,
\begin{equation}\label{Z15}
\begin{split}
\|x_{n+1}-w_{n}\|^{2}&=\|x_{n+1}-x_{n}-\alpha_{n}(x_{n}-x_{n-1})\|^{2}\\
&=\|x_{n+1}-x_{n}\|^{2}+\alpha_{n}^{2}\|x_{n}-x_{n-1}\|^{2}-2\alpha_{n}\langle x_{n+1}-x_{n},x_{n}-x_{n-1}\rangle\\
&\geq\|x_{n+1}-x_{n}\|^{2}+\alpha_{n}^{2}\|x_{n}-x_{n-1}\|^{2}-2\alpha_{n}\|x_{n+1}-x_{n}\|\|x_{n}-x_{n-1}\|\\
&\geq(1-\alpha_{n})\|x_{n+1}-x_{n}\|^{2}+(\alpha_{n}^{2}-\alpha_{n})\|x_{n}-x_{n-1}\|^{2}.\\
\end{split}
\end{equation}
Combining \eqref{Z12}, \eqref{Z13} and \eqref{Z15}, we have
\begin{equation}\label{Z16}
\begin{split}
\|x_{n+1}-x^*\|^{2}
&\leq(1+\alpha_{n})\|x_{n}-x^*\|^{2}-\alpha_{n}\|x_{n-1}-x^*\|^{2}+\alpha_{n}(1+\alpha_{n})\|x_{n}-x_{n-1}\|^{2}\\
&-\tau(1-\alpha_{n})\|x_{n+1}-x_{n}\|^{2}-\tau(\alpha_{n}^{2}-\alpha_{n})\|x_{n}-x_{n-1}\|^{2}\\
&=(1+\alpha_{n})\|x_{n}-x^*\|^{2}-\alpha_{n}\|x_{n-1}-x^*\|^{2}-\tau(1-\alpha_{n})\|x_{n+1}-x_{n}\|^{2}\\
&+[\alpha_{n}(1+\alpha_{n})-\tau(\alpha_{n}^{2}-\alpha_{n})]\|x_{n}-x_{n-1}\|^{2}\\
&=(1+\alpha_{n})\|x_{n}-x^*\|^{2}-\alpha_{n}\|x_{n-1}-x^*\|^{2}-\sigma_{n}\|x_{n+1}-x_{n}\|^{2}+\delta_{n}\|x_{n}-x_{n-1}\|^{2},\\
\end{split}
\end{equation}
where $\sigma_{n}=\tau(1-\alpha_{n})>0$ and $\delta_{n}=\alpha_{n}(1+\alpha_{n})-\tau(\alpha_{n}^{2}-\alpha_{n})\geq0$.\\
Set
\begin{align*}
\Phi_{n}&=\|x_{n}-x^*\|^{2}-\alpha_{n}\|x_{n-1}-x^*\|^{2}+\delta_{n}\|x_{n}-x_{n-1}\|^{2},\\
\end{align*}
and hence
\begin{align*}
\Phi_{n+1}&=\|x_{n+1}-x^*\|^{2}-\alpha_{n+1}\|x_{n}-x^*\|^{2}+\delta_{n+1}\|x_{n+1}-x_{n}\|^{2}.\\
\end{align*}
Therefore, from \eqref{Z16} we have
\begin{equation}\label{Z17}
\begin{split}
\Phi_{n+1}-\Phi_{n}&=\|x_{n+1}-x^*\|^{2}-(1+\alpha_{n+1})\|x_{n}-x^*\|^{2}+\alpha_{n}\|x_{n-1}-x^*\|^{2}\\
&+\delta_{n+1}\|x_{n+1}-x_{n}\|^{2}-\delta_{n}\|x_{n}-x_{n-1}\|^{2}\\
&\leq\|x_{n+1}-x^*\|^{2}-(1+\alpha_{n})\|x_{n}-x^*\|^{2}+\alpha_{n}\|x_{n-1}-x^*\|^{2}\\
&+\delta_{n+1}\|x_{n+1}-x_{n}\|^{2}-\delta_{n}\|x_{n}-x_{n-1}\|^{2}\\
&\leq-(\sigma_{n}-\delta_{n+1})\|x_{n+1}-x_{n}\|^{2}.\\
\end{split}
\end{equation}
Since $0\leq\alpha_{n}\leq\alpha_{n+1}\leq\alpha$,
\begin{equation}\label{Z18}
\begin{split}
\sigma_{n}-\delta_{n+1}
&=\tau(1-\alpha_{n})-\alpha_{n+1}(1+\alpha_{n+1})+\tau(\alpha_{n+1}^{2}-\alpha_{n+1})\\
&\geq\tau(1-\alpha_{n+1})-\alpha_{n+1}(1+\alpha_{n+1})+\tau(\alpha_{n+1}^{2}-\alpha_{n+1})\\
&\geq\tau(1-\alpha)-\alpha(1+\alpha)+\tau(\alpha^{2}-\alpha)\\
&=\tau-2\tau\alpha-\alpha-\alpha^{2}+\tau\alpha^{2}\\
&=-(1-\tau)\alpha^{2}-(1+2\tau)\alpha+\tau.\\
\end{split}
\end{equation}
Combining \eqref{Z17} and \eqref{Z18}, we get
\begin{equation}\label{Z19}
\begin{split}
\Phi_{n+1}-\Phi_{n}\leq-\xi\|x_{n+1}-x_{n}\|^{2},\\
\end{split}
\end{equation}
where $\xi=-(1-\tau)\alpha^{2}-(1+2\tau)\alpha+\tau$. From \eqref{Z09} we know that $\xi>0$.
Therefore,
\begin{equation}\label{Z20}
\begin{split}
\Phi_{n+1}-\Phi_{n}\leq 0.\\
\end{split}
\end{equation}
Thus, the sequence $\{\Phi_{n}\}$ is nonincreasing. Since
\begin{align}
\Phi_{n}&=\|x_{n}-x^*\|^{2}-\alpha_{n}\|x_{n-1}-x^*\|^{2}+\delta_{n}\|x_{n}-x_{n-1}\|^{2}\nonumber\\
&\geq\|x_{n}-x^*\|^{2}-\alpha_{n}\|x_{n-1}-x^*\|^{2},\nonumber\\
\|x_{n}-x^*\|^{2}&\leq\alpha_{n}\|x_{n-1}-x^*\|^{2}+\Phi_{n}\leq\alpha\|x_{n-1}-x^*\|^{2}+\Phi_{1}\nonumber\\
&\leq\cdot\cdot\cdot\leq\alpha^{n}\|x_{0}-x^*\|^{2}+\Phi_{1}(\alpha^{n-1}+\cdot\cdot\cdot+1)\nonumber\\
&\leq\alpha^{n}\|x_{0}-x^*\|^{2}+\frac{\Phi_{1}}{1-\alpha}.\label{Z21}
\end{align}
Similarly, we have
\begin{equation}\label{Z22}
\begin{split}
\Phi_{n+1}&=\|x_{n+1}-x^*\|^{2}-\alpha_{n+1}\|x_{n}-x^*\|^{2}+\delta_{n+1}\|x_{n+1}-x_{n}\|^{2}\\
&\geq-\alpha_{n+1}\|x_{n}-x^*\|^{2}.\\
\end{split}
\end{equation}
Thus, it follows from \eqref{Z21} and \eqref{Z22} that
\begin{align*}
-\Phi_{n+1}\leq\alpha_{n+1}\|x_{n}-x^*\|^{2}\leq\alpha\|x_{n}-x^*\|^{2}\leq\alpha^{n+1}\|x_{0}-x^*\|^{2}+\frac{\alpha\Phi_{1}}{1-\alpha},\\
\end{align*}
and hence from \eqref{Z19} we get
\begin{align*}
\xi\sum_{n=1}^{k}\|x_{n+1}-x_{n}\|^{2}\leq\Phi_{1}-\Phi_{k+1}\leq\alpha^{k+1}\|x_{0}-x^*\|^{2}+\frac{\Phi_{1}}{1-\alpha}\leq\|x_{0}-x^*\|^{2}+\frac{\Phi_{1}}{1-\alpha},\\
\end{align*}
 which implies that $\Sigma_{n=1}^{\infty}\|x_{n+1}-x_{n}\|^{2}<+\infty$. Therefore, $\|x_{n+1}-x_{n}\|\rightarrow0(n\rightarrow\infty)$. Since $\{\alpha_n\}$  is bounded, from \eqref{Z15} we have
$\|x_{n+1}-w_{n}\|\rightarrow 0(n\rightarrow\infty)$. Since
\begin{align*}
0\leq\|w_{n}-x_{n}\|\leq\|x_{n}-x_{n+1}\|+\|x_{n+1}-w_{n}\|,\\
\|w_{n}-x_{n}\|\rightarrow 0\,\,\,\ as\,\,\,\ n\rightarrow \infty.
\end{align*}
In addition,  using Lemma \ref{Lem2.3}, from \eqref{Z14} we have
\begin{align*}
\lim_{n\rightarrow\infty}\|x_{n}-x^*\|=\rho,
\end{align*}
for some $\rho\geq0$. Applying the boundedness of $\{\alpha_n\}$, from \eqref{Z13} we also have
\begin{align*}
\lim_{n\rightarrow\infty}\|w_{n}-x^*\|=\rho.
\end{align*}

\end{proof}
\begin{thy}\label{th4.2} If $A:\mathbb{R}^{n}\rightarrow 2^{\mathbb{R}^{n}}$ is continuous with nonempty compact convex values on $C$ and the suppose $S\neq\emptyset$, then the sequence $\{x_{n}\}$ generated by Algorithm \ref{alg} converges to a solution $\bar{x}$ of \eqref{z01}.
\end{thy}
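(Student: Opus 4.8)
The plan is to combine the Fej\'er-type inequality of Lemma~\ref{Lem4.1} with the conclusions of Lemma~\ref{Th4.1} to force the residual $r_{\lambda_n}(w_n,u_n)$ to vanish, then identify every cluster point of $\{x_n\}$ as an element of $S$, and finally upgrade this to convergence of the whole sequence. As a preliminary remark, if $r_{\lambda_n}(w_n,u_n)=0$ for some $n$ then Lemma~\ref{Lem3.1} already gives $w_n\in S$ and the algorithm stops, so we may assume the iterates are infinite.

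First I would fix $x^*\in S$ (possible since $S\neq\emptyset$). By Lemma~\ref{Th4.1}, the limits $\lim_n\|x_n-x^*\|=\rho$ and $\lim_n\|w_n-x^*\|=\rho$ exist and $\|w_n-x_n\|\to0$; in particular $\{x_n\}$ and $\{w_n\}$ are bounded. From Lemma~\ref{Lem4.1} we get $0\le(1-\mu^2)\|r_{\lambda_n}(w_n,u_n)\|^2\le\|w_n-x^*\|^2-\|x_{n+1}-x^*\|^2\to\rho^2-\rho^2=0$, and since $\mu\in(0,1)$ this yields $\|r_{\lambda_n}(w_n,u_n)\|\to0$.

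Next I would prove that every cluster point $\bar x$ of $\{x_n\}$ lies in $S$. Let $x_{n_k}\to\bar x$; then $w_{n_k}\to\bar x$ because $\|w_n-x_n\|\to0$, and $\bar x\in C$ since each $x_n\in C$ by Proposition~\ref{pro04} and $C$ is closed. As $A$ is upper semicontinuous with compact values, $\{u_{n_k}\}\subseteq A(w_{n_k})$ is bounded, so after passing to a subsequence $u_{n_k}\to\bar u$, and the closed-graph property of an upper semicontinuous map with closed values gives $\bar u\in A(\bar x)$. Passing to a further subsequence, assume $\lambda_{n_k}\to\lambda^*\in[0,1]$. If $\lambda^*>0$, Proposition~\ref{pro02} gives $\min\{1,\lambda_{n_k}\}\|r_1(w_{n_k},u_{n_k})\|\le\|r_{\lambda_{n_k}}(w_{n_k},u_{n_k})\|\to0$, hence $\|r_1(w_{n_k},u_{n_k})\|\to0$; letting $k\to\infty$ and using continuity of $P_C$ gives $r_1(\bar x,\bar u)=\bar x-P_C(\bar x-\bar u)=0$, so $\bar x\in S$ by Proposition~\ref{pro01}. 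If $\lambda^*=0$, then $m_{n_k}\to\infty$, so eventually $m_{n_k}\ge1$, and by minimality of $m_{n_k}$ the exponent $m_{n_k}-1$ violates the linesearch: writing $\hat\lambda_{n_k}:=\gamma^{m_{n_k}-1}=\lambda_{n_k}/\gamma$ and $\hat y_{n_k}:=P_C(w_{n_k}-\hat\lambda_{n_k}u_{n_k})$, every $\nu\in A(\hat y_{n_k})$ satisfies $\hat\lambda_{n_k}\|u_{n_k}-\nu\|>\mu\|r_{\hat\lambda_{n_k}}(w_{n_k},u_{n_k})\|$. Since $\hat\lambda_{n_k}\to0$ and $\bar x\in C$, we get $\hat y_{n_k}\to P_C(\bar x)=\bar x$, and lower semicontinuity of $A$ at $\bar x$ (with $\bar u\in A(\bar x)$) produces $\hat\nu_{n_k}\in A(\hat y_{n_k})$ with $\hat\nu_{n_k}\to\bar u$. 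Dividing the failed inequality by $\hat\lambda_{n_k}\in(0,1)$ and using Proposition~\ref{pro02} gives $\|u_{n_k}-\hat\nu_{n_k}\|>\mu\|r_1(w_{n_k},u_{n_k})\|$; letting $k\to\infty$ forces $r_1(\bar x,\bar u)=0$, so again $\bar x\in S$ by Proposition~\ref{pro01}.

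Finally, since $\bar x\in S$, Lemma~\ref{Th4.1}(1) applied with $x^*=\bar x$ shows $\lim_n\|x_n-\bar x\|$ exists; as $\|x_{n_k}-\bar x\|\to0$ along the chosen subsequence, this limit equals $0$, so $x_n\to\bar x$ (equivalently, conditions (i) and (ii) of Lemma~\ref{Lem2.4} hold with $C$ replaced by $S$). The step I expect to be the main obstacle is the case $\lambda_{n_k}\to0$: one must use the failure of the Armijo rule at the exponent $m_{n_k}-1$ together with the lower semicontinuity of $A$ to extract $\hat\nu_{n_k}\in A(\hat y_{n_k})$ converging to $\bar u$, and then cancel the stepsize via Proposition~\ref{pro02}; everything else is the standard Fej\'er-monotone/Opial-type argument.
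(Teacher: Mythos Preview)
Your proposal is correct and follows essentially the same route as the paper: use Lemma~\ref{Lem4.1} together with Lemma~\ref{Th4.1} to force $\|r_{\lambda_n}(w_n,u_n)\|\to0$, extract a cluster point $\bar x$ with $\bar u\in A(\bar x)$ via upper semicontinuity and compactness, handle the possibility $\lambda_{n_k}\to0$ by invoking the failure of the linesearch at exponent $m_{n_k}-1$ plus lower semicontinuity of $A$ and Proposition~\ref{pro02}, and then upgrade subsequential to full convergence via Lemma~\ref{Th4.1}(1). The only cosmetic difference is that the paper phrases the $\lambda_{n_k}\to0$ step as a proof by contradiction (``suppose $\bar x\notin S$'') whereas you argue it directly via a case split on $\lambda^*$; your organization is in fact slightly cleaner, since the paper's wording ``$\{m_n\}$ is bounded'' really only controls the subsequence.
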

\begin{proof}Let $x^*\in S$. Since $\mu\in(0,1)$, $(1-\mu)\in(0,1)$. It follows from Lemma \ref{Lem4.1} that
\begin{align*}
0\leq(1-\mu^{2})\|r_{\lambda_{n}}(w_{n},u_{n})\|^{2}\leq\|w_{n}-x^*\|^{2}-\|x_{n+1}-x^*\|^{2}\rightarrow 0\;\;\;as\;\;\;n\rightarrow\infty,
\end{align*}
which implies that
\begin{equation}\label{Z24}
\begin{split}
\lim_{n\rightarrow\infty}\|r_{\lambda_{n}}(w_{n},u_{n})\|^{2}=0.\\
\end{split}
\end{equation}
By the boundedness of $\{x_{n}\}$, there exists a convergent subsequence $\{x_{n_{j}}\}$ converging to $\bar{x}$. By Lemma \ref{Th4.1} (2), there also exists a convergent  subsequence $\{w_{n_{j}}\}$ converging to $\bar{x}$.

If $\bar{x}$ is a solution of the problem \eqref{z01}, i.e., $\bar{x}\in S$. In view of Lemma \ref{Th4.1}, we know that $\lim_{n\rightarrow\infty}\|x_{n}-\bar{x}\|$ exists. Hence, by Lemma \ref{Lem2.4} we have that the sequence $\{x_n\}$ converges to $\bar{x}$.

Suppose now that $\bar{x}$ is not a solution of the problem \eqref{z01}, i.e., $\bar{x}\notin S$. We first show that $m_n$ in Algorithm \ref{alg} cannot tend to $\infty$. Since $A$ is continuous with compact values, Proposition 3.11 in \cite{AE1984}  implies that $\{A(w_{n})|n\in N\}$ is bounded set, and so the sequence $\{u_{n}\}$ is a bounded set. Therefore, there exists a subsequence $\{u_{n_{j}}\}$ converging to $\bar{u}$. Since $A$ is upper semi-continuous with compact values, Proposition 3.7 \cite{AE1984} implies that $A$ is closed, and so $\bar{u}\in A(\bar{x})$. By the definition of $m_{n}$, we have
\begin{align*}
&\gamma^{m_{n}-1}\|u_{n}-\nu\|>\mu\|r_{\gamma^{m_{n}-1}}(w_{n},u_{n})\|.\;\;\;\;\;\;\forall\nu\in A(P_{C}(w_{n}-\gamma^{m_{n}-1}u_{n})).\\
\end{align*}
i.e.,
\begin{align*}
\|u_{n}-\nu\|&>\frac{\mu}{\gamma^{{m_{n}}-1}}\|r_{\gamma^{{m_{n}}-1}}(w_{n},u_{n})\|\\
&\geq\frac{\mu}{\gamma^{{m_{n}}-1}}\min\{1,\gamma^{{m_{n}}-1}\}\|r_{1}(w_{n},u_{n})\|\\
&=\mu\|r_{1}(w_{n},u_{n})\|, \,\,\,\,\,\,\forall\,\nu\in A(P_{C}(w_{n}-\gamma^{{m_{n}}-1}u_{n}))\forall\,\,\,\,m_{n}\geq1,\\
\end{align*}
where the second inequality follows from Proposition \ref{pro02} and the equality follows from $\gamma\in (0,1)$.

If $m_{n_{j}}\rightarrow\infty$ , then $P_{C}(w_{n_{j}}-\gamma^{m_{n_{j}}-1}u_{n})\rightarrow\bar{x}$. By the lower semi-continuity of $A$, we get that there exists $\bar{u}_{n_{j}}\in A(P_{C}(w_{n_{j}}-\gamma^{m_{n_{j}}-1}u_{n_{j}}))$ such that $\bar{u}_{n_{j}}$converges to $\bar{u}$. Therefore,
\begin{equation}\label{Z25}
\begin{split}
\|u_{n_{j}}-\bar{u}_{n_{j}}\|>\mu\|r_{1}(w_{n_{j}},u_{n_{j}})\|\\
\end{split}
\end{equation}
Letting $j\rightarrow\infty$ in \eqref{Z25}, we obtain the contradiction
\begin{align*}
0\geq\mu\|r_{1}(\bar{x},\bar{u})\|>0.
\end{align*}
Therefore, $\{m_{n}\}$ is  bounded, and so is$\{\lambda_{n}\}$. By Proposition \ref{pro02} ,
\begin{equation}\label{Z26}
\begin{split}
\|r_{\lambda_{n}}(w_{n},u_{n})\|\geq\min\{1,\lambda_{n}\}\|r_{1}(w_{n},u_{n})\|=\lambda_{n}\|r_{1}(w_{n},u_{n})\|.
\end{split}
\end{equation}
It follows from \eqref{Z24} and \eqref{Z26} that
\begin{align*}
\lim_{n\rightarrow\infty}\lambda_{n}\|r_{1}(w_{n},u_{n})\|=0.
\end{align*}
Hence,
\begin{align*}
\lim_{n\rightarrow\infty}\|r_{1}(w_{n},u_{n})\|=0.
\end{align*}
Since $r_{1}(\cdot,\cdot)$ is continuous and the sequences $\{w_{n}\}$ and $\{u_{n}\}$ are bounded, there exists
an accumulation point $(\bar{x},\bar{u})$ of $\{(w_{n},u_{n})\}$ such that $r_{1}(\bar{x},\bar{u})=0$. Hence $\bar{x}$ is a solution of the multi-valued variational inequality \eqref{z01}. Similar to the preceding proof, we obtain that $\{x_{n}\}$ converges to $\bar{x}$.

\end{proof}

\section{Numerical experiments}\label{nume}
\indent
\par
\setcounter{equation}{0}
In this section, we present some numerical experiments for the proposed algorithm. The Matlab codes are run on a PC (with Intel(R) Core(TM) i3-4010U CPU  $@$ 1.70GHZ) under MATLAB Version 8.4.0.150421 (R2014b) Service Pack 1. Now, we apply our algorithms to solve the VIP and compare numerical results with other algorithms.

In the following tables, ``Iter." denotes the number of iterations and ``CPU" denotes the CPU time in seconds. The tolerance $\varepsilon $ means when $\|r_{\mu}(x,w)\|\leq \varepsilon $, the procedure stops. \\

\begin{exm}\label{e4.1} Let
\begin{align*}
C: = \{ {x=(x_1,x_2) \in \mathbb{R}_ + ^2:0\leq x_n\leq10},n=1,2\},
\end{align*}
and $A:K\rightarrow 2^{\mathbb{R}^2}$ be defined by
\begin{align*}
A(x) = \{(x_1^{2}+t,x_2^{2}),\,\,\,\forall\,\,\,x=(x_1,x_2)\in \mathbb{R}^2,\,\,\,\ t\in[0,1/5]\}.
\end{align*}
\end{exm}
It is obvious that $A$ satisfies the assumptions in Theorem \ref{th4.2}. We choose $ \mu=0.98 $, $\gamma=0.91 $, $ \alpha=0.03 $ for our Algorithm \ref{alg}; $ \mu=0.35 $, $ \gamma=0.55 $ for Algorithm 2.1 in \cite{FC2014}; $ \mu=0.54 $, $\gamma=0.74 $ for Algorithm 3.1 in \cite{DLY17}. See Figure 1 and Table 1.
\begin{table}[h]
{\begin{footnotesize}\begin{flushleft}
\textbf {Table 1}  Example 4.1 .
\end{flushleft}
\end{footnotesize}}
\centering
\begin{tabular}{llllllllllll}
\hline
&\multicolumn{2}{l}{Algorithm3.1} & & \multicolumn{2}{l}{Algorithm 2.1\cite{FC2014}} & & \multicolumn{3}{l}{Algorithm 3.1 \cite{DLY17}}&  \\
\cline{2-9}
Tolerance $\varepsilon $ & Iter. & CPU &  &Iter.& CPU &  & Iter. & CPU &  \\
\cline{1-11}
          $10^{-1}$       & 11 &0.0780&& 13 &0.2964&& 15 &0.0780 &  \\
          $10^{-2}$       & 13 &0.0936&& 14 &0.3076&& 29 &0.1560 &  \\
          $10^{-3}$       & 14 &0.0936&& 15 &0.3120&& 42 &0.2340 &  \\
          $10^{-4}$       & 15 &0.0936&&  - &  -   && -  &   -   &  \\
\hline
\end{tabular}
\end{table}
\clearpage
\begin{figure}[h]
\centering
\includegraphics[height=8cm, width=10cm]{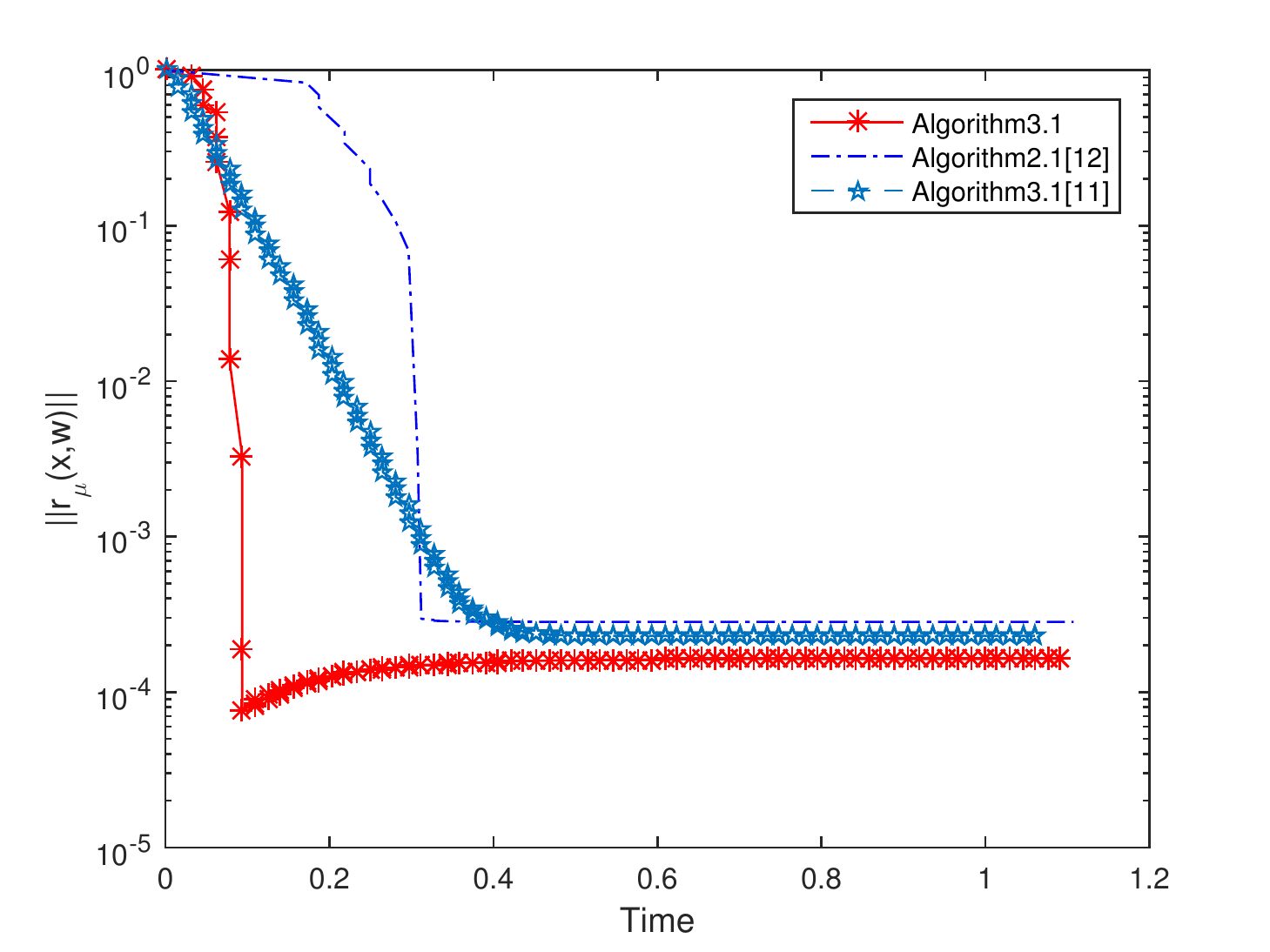}
\caption{$\|x_{n}-x^{*}\|$ and time in Example 4.1}
\end{figure}

\begin{exm}\label{e4.2} Let $n=4$ The feasible set $C$ is given by
\begin{align*}
C: = \left\{ {\left. {x \in {R^n}} \right|\sum\limits_{i = 1}^n {{x_i}=1,-10\leq{x_i}\leq10 ,i = 1, \cdots ,n} } \right\}.
\end{align*}
and $A:K\rightarrow 2^{\mathbb{R}^2}$ be defined by
\begin{align*}
A(x)=\{(t+x_{1},x_{1},x_{1},x_{1}):t\in[\frac{1}{10},\frac{1}{5}]\}
\end{align*}
\end{exm}
Example \ref{e4.2} is tested in \cite{ZFC2017}. It is obvious that $A$ is pseudomonotone and all the assumptions in Theorem \ref{th4.2} are satisfied. We choose $ \mu=0.14 $, $\gamma=0.10 $, $ \alpha=0.72 $ for our Algorithm \ref{alg}; $ \mu=0.52 $, $\gamma =0.49 $ for Algorithm 2.1 in \cite{FC2014}; $ \mu=0.37 $, $\gamma=0.34 $ for Algorithm 3.1 in \cite{DLY17}; See Figure 2 and Table 2.
\begin{table}[h]
{\begin{footnotesize}\begin{flushleft}
\textbf {Table 2}  Example 4.2.
\end{flushleft}
\end{footnotesize}}
\centering
\begin{tabular}{llllllllllll}
\hline
&\multicolumn{2}{l}{Alrithm3.1} & & \multicolumn{2}{l}{Algorithm 2.1\cite{FC2014}} & & \multicolumn{3}{l}{Algorithm 3.1 \cite{DLY17}}&  \\
\cline{2-9}
Tolerance $\varepsilon $ & Iter. & CPU &  &Iter.& CPU &  & Iter. & CPU &  \\
\cline{1-11}
          $10^{-3}$       & 8  &0.0624&& 9  &0.3120&& 28  & 0.1248 &  \\
          $10^{-5}$       & 12 &0.7800&& 18 &0.3900&& 45 & 0.2028 &  \\
          $10^{-7}$       & 16 &0.0936&& 26 &0.4680&& 63 & 0.2808 &  \\
\hline
\end{tabular}
\end{table}
\clearpage
\begin{figure}[h]
\centering
\includegraphics[height=8cm, width=10cm]{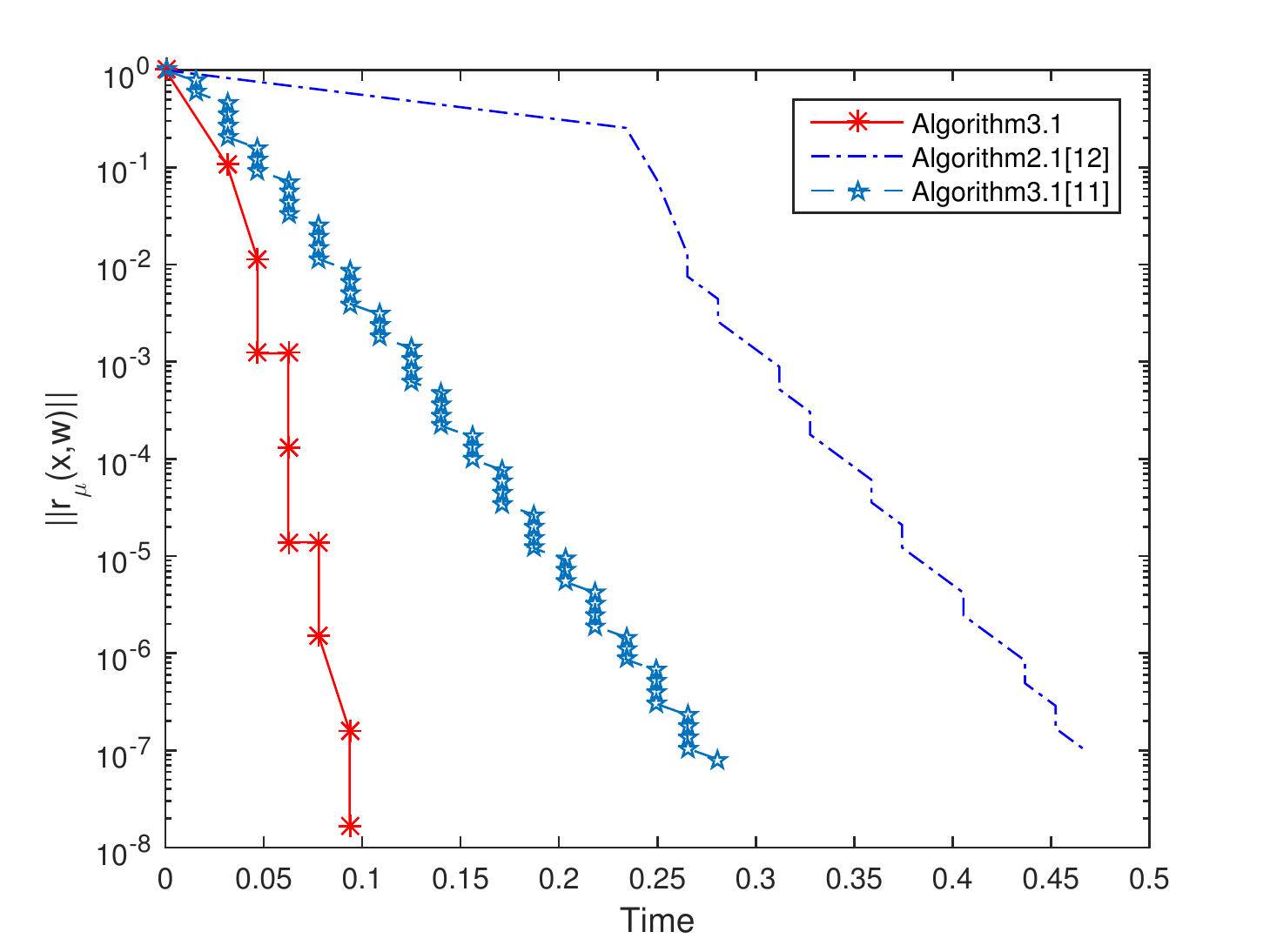}
\caption{$\|x_{n}-x^{*}\|$ and time in Example 4.2}
\end{figure}

\section{Conclusions}
In this paper,  we proposed an inertial  Tseng's  extragradient algorithm for solving multi-valued variational inequalities. We proved the convergence of the
sequences generated by the proposed algorithm and presented some numerical experiments to illustrate the efficiency of our method. Compared with those algorithms in \cite{BM19,HHL19},  only one projection is needed at each iterate in our method. Our method is also different from that in \cite{DLY17}. First, we incorporate the inertial effects in our method. Secondly, the next iterate is related to  Tseng��s technique in our method while in \cite{DLY17} the next iterate is based on contraction method studied in \cite{He97}.


\begin{thebibliography}{5}

\bibitem{Al00}
F. Alvarez, On the minimizing property of a second order dissipative system in Hilbert spaces. \emph{SIAM J. Control Optim.} \textbf{38 }(2000) 1102-1119.
\bibitem{AA01}
F. Alvarez and  H. Attouch,  An inertial proximal method for maximal monotone operators via discretization of a nonlinear oscillator with damping. \emph{Set-Valued Anal.} \textbf{9(1)}(2001) 3-11.
\bibitem{APR14}
H. Attouch, J. Peypouquet, and P. Redont, A dynamical approach to an inertial forward-backward
algorithm for convex minimization. \emph{SIAM J. Optim.} \textbf{24}(2014) 232-256.
\bibitem{AE1984}
J. P. Aubin and I. Ekeland, \emph{Applied Nonlinear Analysis}, John Wiley Sons Inc, New York, 1984.
\bibitem{BCH15}
R. I. Bot, E. R. Csetnek, and C. Hendrich, Inertial Douglas-Rachford splitting for monotone inclusion
problems. \emph{Appl. Math. Comput.} \textbf{256}(2015) 472-487.
\bibitem{BM19}
R. S. Burachik and R. Diaz Milln, A projection algorithm for non-monotone variational inequalities, \emph{Set-Valued and Variational Analysis} (2019). https://doi.org/10.1007/s11228-019-00517-0
\bibitem{CGR11}
Y. Censor, A. Gibali, and S. Reich, The subgradient extragradient method for solving variational inequalities in Hilbert space. \emph{J. Optim. Theory Appl.} \textbf{148}(2011) 318-335.
\bibitem{CGR12}
Y. Censor, A. Gibali, and S. Reich, Extensions of Korpelevich��s extragradient method for solving the variational inequality problem in Euclidean space. \emph{Optimization} \textbf{61(9)}(2012) 1119-1132.
\bibitem{CCMY15}
C. H. Chen, R. H. Chan, S. Q. Ma, and J. F. Yang, Inertial proximal ADMM for linearly constrained
separable convex optimization. \emph{SIAM J. Imaging Sci.} \textbf{8(4)}(2015) 2239-2267.
\bibitem{CMY15}
C. H. Chen , S. Q. Ma, and J. F. Yang, A general inertial proximal point algorithm for mixed varia- tional
inequality problem. \emph{SIAM J. Optim.} \textbf{25(4)}(2015) 2120-2142.
\bibitem{DLY17}
Q. Dong, Y. Lu, J. Yang, and S. He, Approximately solving multi-valued variational inequalities by using a projection and contraction algorithm. \emph{Numer. Algor.} \textbf{76}(2017) 799-812.
\bibitem{FC2014}
 C. J. Fang and S. L. Chen, A subgradient extragradient algorithm for solving multi-valued variational inequality. \emph{Appl. Math. Comput.} \textbf{229}(2014) 123-130.
\bibitem{FH11}
C. J. Fang and Y. R. He, A double projection algorithm for multi-valued variational inequalities and a unified framework of the method. \emph{Appl. Math. Comput.} \textbf{217}(2011), 9543-9511.
\bibitem{FH13}
C. J. Fang and Y. R. He, An extragradient method for generalized variational inequality. \emph{Pac. J. Optim.}
\textbf{9}(2013) 47-59.
\bibitem{He97}
 B. S. He, A class of projection and contraction methods for monotone variational inequalities. \emph{Appl.
Math. Opt.} \textbf{35}(1997) 69-76.
\bibitem{HHL19}
X. He, N. Huang, and X. Li, Modified projection methods for solving multi-valued variational inequality without
monotonicity. \emph{ Networks and Spatial Economics}, (2019). https://doi.org/10.1007/s11067-019-09485-2
\bibitem{He06}
Y. R. He, A new double projection algorithm for variational inequalities. \emph{J. Comput. Appl. Math.}
\textbf{185(1)}(2006) 166-173.
\bibitem{Kon98}
I. V. Konnov, A combined relaxation method for variational inequalities with nonlinear constraints. \emph{Math. Program} \textbf{80}(1998) 239-252.
\bibitem{Kor76}
G. M. Korpelevich, The extragradient method for fifinding saddle points and other problems. \emph{J. Mat.Metody} \textbf{12}(1976) 747-756.
\bibitem{KS14}
R. Kraikaew and S. Saejung, Strong convergence of the halpern subgradient extragradient method for solving variational inequalities in Hilbert spaces. \emph{J. Optim. Theory Appl.} \textbf{163}(2014) 399-412.
\bibitem{KF84}
A. N. Kulikov and V. R. Fazylov, A finite solution method for systems of convex inequalities. \emph{Sov. Math. (Izvestiya VUZMatematika)} \textbf{28(11)}(1984) 75-80.
\bibitem{LH2009}
F. L. Li and Y. R. He, An algorithm for generalized variational inequality with pseudomonotone mapping. \emph{J. Comput. Appl. Math.} \textbf{228}(2009) 212-218.
\bibitem{Mai09}
P. E. Mainge, Numerical approach to monotone variational inequalities by a one-step projected reflected gradient method with line-search procedure. \emph{Comput. Math. Appl.} \textbf{72(3)}(2016) 720-728.
\bibitem{Mal2015}
Y. Malitsky, Projected reflected gradient methods for monotone variational inequalities. \emph{SIAM J.
Optim.} \textbf{25}(2015) 502-520.
\bibitem{OCBP14}
P. Ochs, Y. Chen, T. Brox , and T. Pock, Ipiano: inertial proximal algorithm for non-convex optimization. \emph{SIAM J. Imaging Sci.} \textbf{7(2)}(2014) 1388-1419.
\bibitem{Op67}
Z. Opial, Weak convergence of the sequence of successive approximations for nonexpansive mappings. \emph{Bull. Am. Math. Soc.} \textbf{73}(1967) 591-597.
\bibitem{SS99}
M. V. Solodov and B. F. Svaiter, A new projection method for variational inequality problems. \emph{SIAM J.
Control Optim.} \textbf{37(3)}(1999) 765-776.
\bibitem{TV2018}
D. V. Thong and H. D. Van, Modified Tseng��s extragradient algorithms for variational inequality problems. \emph{Journal of Fixed Point Theory and Applications} \textbf{20(4)}(2018) 152.
\bibitem{Ts00}
P. Tseng, \emph{A modifified forward-backward splitting method for maximal monotone mappings}. SIAM J. Control Optim. \textbf{38}(2000) 431-446.
\bibitem{WXZ03}
Y. J. Wang, N. H. Xiu, and J. Z. Zhang, Modified extragradient method for variational inequalities and
verification of solution existence. \emph{ J. Optim. Theory Appl.} \textbf{119}(2003) 167-183.
\bibitem{XH11}
F. Q. Xia and N. J. Huang, A projection-proximal point algorithm for solving generalized variational
inequalities. \emph{J. Optim. Theory Appl.} \textbf{150}(2011) 98-117.
\bibitem{Zara71}
E. H. Zarantonello, \emph{Projections on convex sets in Hilbert space and spectral theory}, In: Zarantonello,
E.H. (ed.) Contributions to Nonlinear Functional Analysis, Academic Press, New York, 1971.
\bibitem{ZM17}
S. D. Zeng and S. Mig��orski, Noncoercive hyperbolic variational inequalities with applications to contact
mechanics. \emph{J. Math. Anal. Appl.} \textbf{455}(2017) 619-637.
\bibitem{ZFC2017}
L. X. Zhang,  C. J. Fang and S. L. Chen, A projection-type method for solving multi-valued variational inequalities and fixed point problems \emph{Optimization}, \textbf{66(12)}(2017) 2329-2344.
\bibitem{ZFC18}
L. X. Zhang, C. J. Fang, and S. L. Chen, An inertial subgradient-type method for solving single-valued variational inequalities and fixed point problems. \emph{Numer. Algor.} \textbf{79}(2018) 941-956.




\end{thebibliography}
\end{document}